\numberwithin{equation}{section} 
\newtheorem{prop}[equation]{Proposition}
\newtheorem{thm}[equation]{Theorem}
\newtheorem{lem}[equation]{Lemma}
\newtheorem{cor}[equation]{Corollary}
\newtheorem{defi}[equation]{Definition}
\newtheorem{rem}[equation]{Remark}
\newtheorem{assum}[equation]{Assumption}
\newtheorem{cond}[equation]{Condition}
\renewcommand{\phi}{\varphi}			
\renewcommand{\vec}[1]{\boldsymbol{#1}}	
\renewcommand{\tilde}[1]{\widetilde{#1}}
\renewcommand{\hat}[1]{\widehat{#1}}
\newcommand{\N}{\mathbb{N}}				
\newcommand{\R}{\mathbb{R}}				
\newcommand{\Rd}{\mathbb{R}^d}			
\newcommand{\closed}[1]{\overline{#1}}				
\newcommand{\abs}[1]{\left\lvert#1\right\rvert}	  	
\renewcommand{\d}{^{\ast}}							
\newcommand{\del}{\partial}							
\newcommand{\injto}{\xhookrightarrow{}} 			
\newcommand{\fp}[2]{\langle #1, #2 \rangle}      	
\newcommand{\norm}[1]{\left\lVert#1\right\rVert}  	
\newcommand{\bignorm}[1]{\big\lVert#1\big\rVert}  	
\newcommand{\supp}{\operatorname{supp}}				
\newcommand{\weakto}{\rightharpoonup}						
\newcommand{\Bog}{\mathcal{B}}						
\renewcommand{\div}{\operatorname{div}}				
\newcommand{\E}{\mathcal{E}}						
\newcommand{\D}[1]{\vec{D #1}}						
\newcommand{\tr}{\operatorname{tr}}					
\renewcommand{\S}{\boldsymbol{\mathcal{S}}}		  	
\newcommand{\intom}{\int_{\Omega}}					
\newcommand{\dotn}{\cdot \vec{n}}					
\newcommand{\dx}{\, dx}
\begin{document}
\selectlanguage{english}

\begin{center}
	
	{ \Large \bfseries 
		Weak solutions for steady, fully inhomogeneous generalized Navier-Stokes equations
	}\\[0.4cm]
	{Julius~Je\ss{}berger and Michael~R\r{u}\v{z}i\v{c}ka}
	
	{\today}\\[0.4cm]
	
	{Institute of Applied Mathematics, Albert-Ludwigs-University Freiburg, Ernst-Zermelo-Str. 1, D-79104 Freiburg, Germany}
	
	{E-mail addresses: julius.jessberger@mathematik.uni-freiburg.de, rose@mathematik.uni-freiburg.de}
	
\end{center}

\textbf{Abstract}\hspace{0.15cm}
We consider the question of existence of weak solutions for the fully inhomogeneous, stationary generalized Navier-Stokes equations for homogeneous, shear-thinning fluids. For a shear rate exponent $p \in \big(\tfrac{2d}{d+1}, 2\big)$, previous results require either smallness of the norm or vanishing of the normal component of the boundary data. In this work, combining previous methods, we propose a new, more general smallness condition.

\noindent\textbf{Keywords}\hspace{0.15cm}
Generalized Newtonian fluid, existence of weak solutions, a priori estimate, inhomogeneous Dirichlet boundary condition.

\noindent\textbf{AMS Classifications (2020)}\hspace{0.15cm}
35D30, 35Q35, 76D03.

\pagestyle{plain}

\section{Introduction}

In this article, we consider the steady, generalized Navier-Stokes equations
\begin{equation} \label{eq:main_problem1}
	- \div \S(\vec{Dv}) + \div (\vec{v}\otimes\vec{v}) + \nabla \pi = \vec{f} \quad \textrm{in } \Omega
\end{equation}
for the velocity $\vec{v}$ and the pressure $\pi$, which describe the steady motion of a homogeneous, generalized Newtonian fluid.
The system is completed by inhomogeneous divergence and Dirichlet boundary conditions
\begin{equation} \label{eq:main_problem2} \begin{aligned}
		\div \vec{v} &= g_1  \quad&&\textrm{in } \Omega\,, \\
		\vec{v} &= \vec{g_2}  &&\textrm{on } \del \Omega\,.
\end{aligned} \end{equation}
Here, $\S$ is an extra stress tensor with $(p,\delta)$-structure, $\vec{f}$ is an external body force, $g_1$ and $\vec{g_2}$ are data inside/ on the boundary of a bounded Lipschitz domain $\Omega \subset \Rd$ of dimension $d \in \{2, 3\}$.
We refer to Subsection~\ref{sub:assumptions} for precise definitions of these objects.
The case $p=2$ results in the well-known, classical Navier-Stokes
equations, while for $p \neq 2$ we deal with a shear-dependent viscosity in the fluid flow equation. 

The decomposition $\vec{v} = \vec{u} + \vec{g}$, where $\vec{g}$ represents the boundary and divergence data and $\vec{u} \in W_0^{1,p}(\Omega)$ with $\div\vec{u} = 0$, yields a homogeneous, pseudomonotone problem for $\vec{u}$. Its solvability depends crucially on the growth rate $p$: in the uncritical case $p > 2$, one obtains a coercive operator and existence of weak solutions follows directly from the theory of pseudomonotone operators~\cite{r-mol-inhomo}. Due to the antisymmetry of the convective term, the same holds for the homogeneous problem for any $p > \tfrac{3d}{d+2}$~\cite{lions-quel}. This approach can be extended to the range $p > \tfrac{2d}{d+2}$ with the Lipschitz truncation method~\cite{fms2, dms}.

In the critical case $p=2$, which results in the inhomogeneous
Navier-Stokes equations, global coercivity is not clear in the general
case since it does not follow from a growth argument. Especially the
case $g_1=0$ has been studied intensively since the seminal works by
J. Leray and E. Hopf who obtained existence if the flux over each
boundary component $\del \Omega_i$ vanishes~\cite{Leray33,
  Hopf41}. Their results have been generalized in favor of a smallness
condition (cf.~\cite{Galdi} and references therein). This approach
relies on choosing an extension $\vec{g}$ such that the convective
term becomes small in the a priori estimate. Another approach, mainly
followed by M. Korobkov, K. Pileckas and R. Russo, does not require
smallness of the data but its application is, so far, restricted to a
two-dimensional or an axially symmetric geometry~\cite{KPR_2d3d}.

Until now, all results for the supercritical case $p<2$ require some 
smallness condition for the data in order to obtain local coercivity. The first result is due to E. Blavier and A. Mikeli{\'c}, where smallness of the norm~$\norm{\vec{g_2}}$ in a trace space is required and $p > \tfrac{3d}{d+2}$, $g_1=0$~\cite{mikelic}. 
M. Lanzendörfer considers pressure- and shear-rate-dependent viscosity for $g_1=0$, $p > \tfrac{2d}{d+1}$ and vanishing normal velocity component at the boundary ($\vec{g_2} \dotn = 0$). His employment of an extension $\vec{g}$ with small $L^p$-norm allows for tangential boundary data of arbitrary size~\cite{lan09,lan08} if $p >2- \tfrac{1}{d}$.
C. Sin examined electrorheological fluids with $g_1=0$ and the full range of $p > \tfrac{2d}{d+2}$. He reduces the regularity assumptions compared to~\cite{mikelic} but some of his computations could not be confirmed in~\cite{JR21_inhom}.
In \cite{JR21_inhom}, the authors consider fluids with $(p,\delta)$-structure, $p > \tfrac{2d}{d+2}$ and nonzero divergence data $g_1$. Therefore, smallness of the norms of $g_1$ and $\vec{g_2}$ is assumed and the regularity questions due to~\cite{Sin} have been partially closed.
H. Elshehabey and A. Silvestre~\cite{Elshehabey21} obtained a result
for $p > \tfrac{2d}{d+2}$, $g_1=0$ which is similar to~\cite{mikelic,
  JR21_inhom}. Moreover, they showed that it suffices that the normal
velocity component is small compared to the tangential boundary data
if $p > 2- \tfrac{1}{d}$, $g_1=0$ and $\del\Omega \in C^3$. Their
statement can be read as a perturbation result: for any non-zero size
of tangential data $\norm{\vec{g_2}\cdot \vec{t}}$, there exists a
radius $\epsilon>0$ such that if $\norm{\vec{g_2}\dotn} < \epsilon$, we find a solution.
Finally, we want to mention P.~Kaplick{\'y}'s, J.~M{\'a}lek's and J.~Star{\'a}'s article~\cite{Kaplicky07}, where strong solutions are obtained for $d=2$, $p > \tfrac{3}{2}$, $g_1=0$ whenever the norm~$\norm{\vec{g_2}}$ is small or the normal component vanishes.

The proofs in these works (apart from~\cite{Kaplicky07}) are rather similar and combine the arguments from pseudomonotone operator theory with local coercivity. However, they differ in the way how the extensions of the boundary data are constructed: while straight-forward extension operators and a divergence correction with the Bogovski\u{\i} operator are applied in \cite{mikelic, JR21_inhom}, extensions of tangential data with small $L^p$-norm are constructed in~\cite{lan09, Elshehabey21}. For $p \in \big( \tfrac{2d}{d+1}, 2\big)$, the question arises whether smallness of the normal boundary and divergence data suffices while the tangential data is arbitrary.
In contrast, for very low values $p \in \big( \tfrac{2d}{d+2}, \tfrac{2d}{d+1}\big]$, all previous results agree and it is also not clear how they could be improved in the sense of arbitrary tangential boundary data (cf.~the discussion in Section~\ref{sec:notes}).
Another difference among these works is the method of regularization if $p < \tfrac{3d}{d+2}$ which influences  the regularity requirements. In this regard, employing approximate problems via an additional elliptic term and passing to the limit via the Lipschitz truncation method seems to yield the best results. We point out that these methods are applicable to rather general settings, e.g. fluids with pressure-dependent viscosities and electrorheological fluids (cf. the references above).

The aim of this article is to present a unified result for the case $p \in \big( \tfrac{2d}{d+1}, 2\big)$ where a smallness condition is posed at the normal velocity component $\vec{g_2} \dotn$, the tangential velocity component $\vec{g_2} \times \vec{n}$ and divergence data $g_1$ and which generalizes the previous approaches. 
Therefore, separate extensions for normal and tangential boundary data are constructed and employed in the existence proof. The regularity assumptions are kept minimal compared to the previous works. In order to focus on the key steps of the proof, we present this result in the setting of fluids with $(p,\delta)$-structure.

In the first section, notation is introduced and the assumptions and the main result, Theorem~\ref{thm:main}, are stated.
After that, we construct extensions of the boundary and divergence data.
Theorem~\ref{thm:main} is proved in Section~\ref{sec:proof}.
We then discuss the connection between Theorem~\ref{thm:main} and previous results as well as possible extensions to more general situations.
In the Appendix, we give more details about extensions of boundary and divergence data with certain regularity properties.

\section{Preliminaries, assumptions and main result}\label{sec:preliminaries}

\subsection{Notation}\label{sub:notation}
Vectors and tensors are written in bold letters while points and scalar quantities are not. Let $\R_\textit{sym}^{d\times d}$ denote the vector space of symmetric matrices of size $d$.
Throughout this article, we consider a bounded domain $\Omega \subset \R^d$ of dimension $d \in \{2, 3\}$ with at least Lipschitz continuous boundary. Hence, the normal vector $\vec{n}$ of unit length is defined almost everywhere on the boundary $\del \Omega$. 

We use standard notation for Lebesgue and Sobolev spaces and do not distinguish the notation of scalar-, vector- and matrix-valued function spaces.
The space $L_0^p(\Omega)$ contains precisely those $L^p$-functions
with mean value zero and we set $V_p := \{\vec{v} \in W_0^{1,p}(\Omega) \colon \div \vec{v} = 0\}$.
$\D{v}$ denotes the symmetric gradient of $\vec{v} \in W^{1,p}(\Omega)$. It gives rise to a norm of $V_p$ due to Korn's and Poincar{\'e}'s inequalities, which is the norm that we will usually employ on $V_p$.

A Banach space $X$ has a topological dual $X\d\!$ and $\fp{\cdot}{\cdot}_X$ denotes their canonical dual pairing. Indicating $X$ may be omitted if it is clear from the context. The conjugate exponent $p'$ is defined via $\frac{1}{p} + \frac{1}{p'} = 1$ for any $p \in [ 1, \infty ]$. We use the critical Sobolev exponent which is defined as $p\d := \frac{pd}{d-p} \in (p, \infty)$ for $p<d$ while $p\d := \infty$ is set otherwise.


\subsection{Assumptions}\label{sub:assumptions}
In this work, we consider extra stress tensors $\S$ which have
$(p,\delta)$-structure. This assumption generalizes the class of
power-law-models, but still assumes that the extra stress depends only
on the strain rate tensor $\D{v}$ and possesses a monotone, continuous strain-to-stress relation.
Albeit it seems possible to generalize the results to the extra stress tensors with variable exponents and, therefore, to the case of electrorheological fluids~(cf.~\cite{Sin}) or fluids with pressure- and shear-rate dependent viscosity~(cf.~\cite{lan09}), we stick to the simpler case for the ease of presentation.
As in~\cite{GNSE_DR07}, we assume:

\begin{assum}[$(p,\delta)$-structure]\label{assum:stress_tensor}
	The extra stress tensor $\S \colon \mathbb{R}_\textrm{sym}^{d
		\times d} \to \mathbb{R}_{\textrm{sym}}^{d \times d}$ has
	$(p,\delta)$-structure, i.e., it is continuous, satisfies $\S(\vec{0}) = \vec{0}$ and there exist constants $p \in (1, \infty)$, $\delta \geq 0$ and $C_1(\S)$, $C_2(\S) > 0$ such that
	\begin{equation*} \begin{split} 
	(\S(\vec{A}) - \S(\vec{B}))\cdot (\vec{A}-\vec{B}) &\geq C_1(\S)
	\, (\delta + \abs{\vec{B}} + \abs{\vec{A}-\vec{B}})^{p-2}
	\abs{\vec{A}-\vec{B}}^2,
	\\ 
	\abs{\S(\vec{A}) - \S(\vec{B})} &\leq C_2(\S) \, (\delta +
	\abs{\vec{B}} + \abs{\vec{A}-\vec{B}})^{p-2} \abs{\vec{A}-\vec{B}}
	\end{split}\end{equation*}
	holds for all $\vec{A}, \vec{B} \in \mathbb{R}_\text{sym}^{d \times d}$. The constants $C_1(\S), C_2(\S)$ and $p$ are called the characteristics of $\S$.
\end{assum}

We decompose the boundary data $\vec{g_2}$ into normal and tangential part, i.e.,
\begin{equation*}
\vec{g_n} := (\vec{g_2} \dotn) \vec{n}, \quad \vec{g_t} := \vec{g_2} - \vec{g_n}.
\end{equation*}

\begin{assum}[Growth and regularity]\label{assum:growth}
	We assume that $p \in (\tfrac{2d}{d+1}, 2)$ and define $s := \max \big\{ \big(\tfrac{p\d}{2} \big)', p \big\}$, $r :=\max \big\{  \tfrac{dp}{dp-2d+p},2p'\big\}$ and $q > r$.
	
	We assume $\vec{f} \in W_0^{1,p}(\Omega)\d$, $g_1 \in L^s(\Omega)$ and that $\del \Omega \in C^{k,1}$ with $k \in \{0, 1\}$. If $\del \Omega \in C^{1,1}$, we suppose $\vec{g_n} \in W^{1-\tfrac{1}{p},p}(\del\Omega) \cap W^{-\tfrac{1}{r},r}(\del\Omega)$ and $\vec{g_t} \in W^{1-\tfrac{1}{p},p}(\del\Omega) \cap W^{-\tfrac{1}{q},q}(\del\Omega)$. Otherwise, i.e.,~if $\del \Omega \in C^{0,1}$, we suppose $\vec{g_n} \in W^{1-\tfrac{1}{s},s}(\del\Omega)$ and $\vec{g_t} \in W^{1-\tfrac{1}{p},p}(\del\Omega) \cap L^q(\del \Omega)$.
	In both cases, we assume that the compatibility condition
	\begin{equation}\label{eq:s:compatibility}
	\int_{\Omega} g_1 \dx = \int_{\del \Omega} \vec{g_2} \dotn \dx
	\end{equation}
	is fulfilled.
	
	We abbreviate the norms
	\begin{gather*}
	K_d := \norm{g_1}_s,	\quad
	K_f := \norm{\vec{f}}_{W_0^{1,p}(\Omega)\d},\\
	K_n := \left\{ \begin{split}
	\norm{\vec{g_n}}_{1-\tfrac{1}{p},p} + \norm{\vec{g_n}}_{-\tfrac{1}{r},r} \qquad &\textrm{if } \del\Omega\in C^{1,1},\\
	\norm{\vec{g_n}}_{1-\tfrac{1}{s},s} \qquad &\textrm{else if } \del\Omega\in C^{0,1},
	\end{split}	\right.\\
	K_t := \left\{ \begin{split}
	\norm{\vec{g_t}}_{1-\tfrac{1}{p},p} + \norm{\vec{g_t}}_{-\tfrac{1}{q},q} \qquad &\textrm{if } \del\Omega\in C^{1,1},\\
	\norm{\vec{g_t}}_{1-\tfrac{1}{p},p} + \norm{\vec{g_t}}_{q} \qquad &\textrm{else if } \del\Omega\in C^{0,1}.
	\end{split}	\right.
	\end{gather*}
\end{assum}

\begin{rem}
	$p$ is the growth rate of the viscous stress tensor, $s$
	defines the regularity of the test functions and $r$ is chosen
	such that $\tfrac{1}{p} + \tfrac{1}{p\d} + \tfrac{1}{r} = 1$ is fulfilled at least. This is possible due to the restriction on $p$. Furthermore, the choice of $r$ and the restriction of $p$ imply that there is a continuous embedding $W^{1,s}(\Omega) \injto L^r(\Omega)$. $q > r$ is some extra integrability that will be used to create smallness in $L^r$.
\end{rem}


With this preparation, we may impose a smallness condition that will
prove to be sufficient for the existence of weak solutions.
\begin{cond}[Smallness of the data]\label{cond:data_small}
	The norms of $g_1$, $\vec{g_n}$, $\vec{g_t}$ defined in Assumption~\ref{assum:growth} are so small that there exists $\eta > 0$ s.t.
	\begin{align*}
	C_1&(\S) /  c(\Omega, p, q, \S)
	\\
	\geq	&\left[K_d + K_n\right]^{p-1} \big[ K_d^2 + K_n^2 + K_f
	+ \left[ K_t +  K_n +  K_d + \delta \right]^{p-1} \big]^{2-p}\\
	&+ \left[K_d + K_n\right]^{p-1} \big[\eta^{\tfrac{2}{r}-\tfrac{2}{q}} K_t^2\big]^{2-p}\\
	&+ \left[K_d + K_n\right]^{p-1} \big[\eta^{\tfrac{1}{p}-\tfrac{1}{q}-1} K_t\big]^{(p-1)(2-p)}\\
	&+ \big[\eta^{\tfrac{1}{r}-\tfrac{1}{q}} K_t\big]^{p-1} \big[ K_d^2 + K_n^2 + K_f
	+ \left[ K_t +  K_n +  K_d + \delta \right]^{p-1} \big]^{2-p}\\
	&+ \eta^{(\tfrac{1}{r}-\tfrac{1}{q})(3-p)} K_t^{3-p}\\
	&+ \eta^{\big(\tfrac{1}{r}-\tfrac{1}{q} + (\tfrac{1}{p}-\tfrac{1}{q}-1)(2-p)\big)(p-1)} K_t^{(p-1)(3-p)}\\
	=:& \;L(\eta)
	\end{align*}
	is fulfilled.
\end{cond}

\begin{rem}
	The constant $C_1(\S)$ in Condition \ref{cond:data_small} comes from the characteristics of $\S$ and $c(\Omega, p, q, \S)$ mainly consists of trace lifting, Sobolev embedding and Korn constants.
	I.e., the left-hand side does not depend on the data but can be considered as a ''fixed'' positive constant. The right-hand side depends monotonely on the data $K_d$, $K_t$, $K_n$, $K_f$ s.t. the condition is fulfilled if the data are sufficiently small.
	
	The (free) parameter $\eta$ controls the cutoff length for the tangential boundary data, cf.~Lemma~\ref{lem:extension_tang}. For general data, i.e., if $\vec{{h_\eta}}, \vec{k} \neq \vec{0}$,  the function $L(\eta)$ grows to infinity for both $\eta \to 0$ and $\eta \to \infty$: while the second, the forth and the fifth summand come with a positive exponent of $\eta$, $\eta$ has a negative exponent in the third summand. For the sixth summand, it depends on the values of $p$ and $q$ and the first one is obviously constant.
	Hence, $L$ has a minimum $\eta\d = \eta\d(K_d, K_t, K_n, K_f, \delta)$ which is the optimal choice for $\eta$. For consequences of this observation and for a discussion of some special cases, we refer to Section~\ref{sec:notes}.
\end{rem}

\subsection{Main result}\label{sub:result}

In this subsection, we give a weak formulation of~\eqref{eq:main_problem1},~\eqref{eq:main_problem2} and state our main result.

\begin{defi}[weak formulation]\label{defi:weak_form_pressure}
	Let $\Omega \subset \R^d$ be a bounded domain, $d\in \{2, 3\}$
	and let Assumptions~\ref{assum:stress_tensor} and~\ref{assum:growth} hold.
	A pair $(\vec{v}, \pi) \in W^{1,p}(\Omega) \times
	L^{s'}(\Omega)$ is called a weak solution
	of~\eqref{eq:main_problem1},~\eqref{eq:main_problem2} if it
	satisfies \eqref{eq:main_problem2} and 
	\begin{equation}\label{eq:weak_form_pressure}
	\fp{\S(\D{v})}{\D{\phi}}
	- \fp{\vec{v}\otimes\vec{v}}{\D{\phi}} 
	- \fp{\pi}{\div \vec{\phi}}
	= \fp{\vec{f}}{\vec{\phi}}
	\end{equation}
	for all $\vec{\phi} \in W_0^{1,s}(\Omega)$.
\end{defi}

\begin{rem}
	In some works, the convective term is considered in the form $\fp{\vec{v}\cdot \nabla\vec{v}}{\vec{\phi}}$ which, after formal integration by parts, yields a weak formulation involving the extra term $ - \fp{\vec{v}\div \vec{v}}{\vec{\phi}}$. It can be treated by the same methods and poses no additional difficulties.
\end{rem}

As a direct consequence of de Rham's theorem (cf.~\cite{Sohr}), we can rid ourselves of the pressure and obtain the following equivalent weak formulation:
\begin{lem}\label{lem:weak_form_sole}
	Let the assumptions of
	Definition~\ref{defi:weak_form_pressure} be satisfied and assume that $\vec{v} \in W^{1,p}(\Omega)$ satisfies~\eqref{eq:main_problem2} and
	\begin{equation*}
	\fp{\S(\D{v})}{\D{\phi}}
	- \fp{\vec{v}\otimes\vec{v}}{\D{\phi}} 
	= \fp{\vec{f}}{\vec{\phi}}
	\end{equation*}
	for all $\vec{\phi} \in V_s$. Then there is a pressure $\pi
	\in L^{s'}(\Omega)$ such that $(\vec{v}, \pi)$ is a weak
	solution of~\eqref{eq:main_problem1},~\eqref{eq:main_problem2}.
\end{lem}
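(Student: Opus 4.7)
The plan is to apply de Rham's theorem to the linear functional
$$F(\vec{\phi}) := \fp{\S(\D{v})}{\D{\phi}} - \fp{\vec{v}\otimes\vec{v}}{\D{\phi}} - \fp{\vec{f}}{\vec{\phi}}$$
on $W_0^{1,s}(\Omega)$, which by hypothesis vanishes on the subspace $V_s$. Once I have established $F \in W_0^{1,s}(\Omega)\d$, de Rham's theorem in the form recorded in~\cite{Sohr}, which rests on the surjectivity of $\div \colon W_0^{1,s}(\Omega) \surjto L_0^s(\Omega)$ for a bounded Lipschitz domain $\Omega$ and $1 < s < \infty$, produces a unique $\pi \in L_0^{s'}(\Omega) \subset L^{s'}(\Omega)$ with $F(\vec{\phi}) = \fp{\pi}{\div \vec{\phi}}$ for every $\vec{\phi} \in W_0^{1,s}(\Omega)$. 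Rearranging yields \eqref{eq:weak_form_pressure}; the boundary and divergence conditions~\eqref{eq:main_problem2} on $\vec{v}$ are inherited from the hypothesis.

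The remaining task is to verify the continuity of each summand of $F$ on $W_0^{1,s}(\Omega)$, which is where the choice of $s$ in Assumption~\ref{assum:growth} enters. Since $s \geq p$, the embedding $W_0^{1,s}(\Omega) \injto W_0^{1,p}(\Omega)$ (valid on the bounded domain $\Omega$) bounds the forcing term through $\vec{f} \in W_0^{1,p}(\Omega)\d$, and, combined with the $(p,\delta)$-growth bound $\abs{\S(\D{v})} \lesssim (\delta + \abs{\D{v}})^{p-1}$ implied by Assumption~\ref{assum:stress_tensor}, it also controls the viscous term via $\S(\D{v}) \in L^{p'}(\Omega)$ paired with $\D{\phi} \in L^p(\Omega)$. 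The convective contribution is the delicate one: the Sobolev embedding $W^{1,p}(\Omega) \injto L^{p\d}(\Omega)$ gives $\vec{v}\otimes\vec{v} \in L^{p\d/2}(\Omega)$, so by Hölder's inequality the pairing $\fp{\vec{v}\otimes\vec{v}}{\D{\phi}}$ is finite provided $\D{\phi} \in L^{(p\d/2)'}(\Omega)$, and this is exactly why $s \geq (p\d/2)'$ was built into the definition $s := \max\{(p\d/2)', p\}$.

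There is no deeper obstacle in the argument itself; the lemma is essentially a pressure-recovery statement that matches the test-function space $V_s$ with the space $L^{s'}(\Omega)$ in which $\pi$ must live. The only nontrivial ingredient is external to the proof, namely the surjectivity of the divergence on bounded Lipschitz domains, which is standardly established via the Bogovski\u{\i} operator and permits the dual identification of $F$ with a gradient of an $L^{s'}$-function to go through.
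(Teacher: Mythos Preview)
Your proposal is correct and follows exactly the approach the paper indicates: the lemma is stated as a ``direct consequence of de Rham's theorem (cf.~\cite{Sohr})'' with no further proof given, and your argument spells out precisely this consequence together with the integrability checks that justify $F \in W_0^{1,s}(\Omega)\d$. The only thing to note is that the paper does not record these details at all, so your write-up is in fact more complete than the paper's treatment.
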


The main result of this article reads as follows:

\begin{thm}\label{thm:main}
	Let $\Omega \subset \R^d$ be a bounded domain with boundary $\del \Omega \in C^{k,1}$, $k \in \{0, 1\}$, $d\in \{2, 3\}$ and $p \in \big( \tfrac{2d}{d+1}, 2 \big)$.
	We assume that $\S$, $\vec{f}$, $g_1$ and $\vec{g_2}$ fulfill Assumptions~\ref{assum:stress_tensor}, \ref{assum:growth} and Condition~\ref{cond:data_small}. Then there exists a weak solution of~\eqref{eq:main_problem1},~\eqref{eq:main_problem2}.
\end{thm}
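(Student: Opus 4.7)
The plan is to lift the boundary and divergence data, then solve a homogeneous pseudomonotone problem for the perturbation $\vec{u}:=\vec{v}-\vec{g}$. I would construct $\vec{g}\in W^{1,p}(\Omega)$ in the split form $\vec{g}=\vec{g_{nd}}+\vec{g_{t,\eta}}$: the first summand is a standard trace lift of $\vec{g_n}$ together with a Bogovski\u{\i} correction absorbing the divergence datum $g_1$, with norms controlled by $K_d$ and $K_n$; the second is a Hopf-type cut-off extension of $\vec{g_t}$ supported in an $\eta$-tube around $\del\Omega$, with $W^{1,p}$-norm bounded by $K_t$ but $L^r$-norm scaling like $\eta^{1/r-1/q}K_t$. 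These are exactly the objects produced by the forthcoming constructions, in particular Lemma~\ref{lem:extension_tang}.

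Substituting $\vec{v}=\vec{u}+\vec{g}$ reduces the problem to finding $\vec{u}\in V_p$ such that
\begin{equation*}
\fp{\S(\D{u}+\D{g})}{\D{\phi}}-\fp{(\vec{u}+\vec{g})\otimes(\vec{u}+\vec{g})}{\D{\phi}}=\fp{\vec{f}}{\vec{\phi}}\quad\text{for all }\vec{\phi}\in V_s.
\end{equation*}
Because $p$ may fall below $\tfrac{3d}{d+2}$, I would first regularize by an $\varepsilon$-elliptic term $\varepsilon\fp{\abs{\D{u^\varepsilon}}^{s-2}\D{u^\varepsilon}}{\D{\phi}}$, which lifts the solution $\vec{u^\varepsilon}$ into $V_s$ and makes the operator coercive and pseudomonotone on $V_s$. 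Existence of $\vec{u^\varepsilon}$ then follows from Galerkin approximation combined with Brezis' theorem on pseudomonotone operators. The pressure is recovered only at the very end by Lemma~\ref{lem:weak_form_sole}.

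The crux is the a priori bound on $\vec{u^\varepsilon}$, uniform in $\varepsilon$, which is closed precisely by Condition~\ref{cond:data_small}. Testing with $\vec{u^\varepsilon}$ and using the antisymmetry $\fp{\vec{u^\varepsilon}\otimes\vec{u^\varepsilon}}{\D{u^\varepsilon}}=0$, valid for $\vec{u^\varepsilon}\in V_p$, the $(p,\delta)$-monotonicity of $\S$ yields a coercive left-hand side of order $C_1(\S)\,\norm{\D{u^\varepsilon}}_p^p$ modulo lower-order terms. The remaining convective contributions
\begin{equation*}
\fp{\vec{g}\otimes\vec{u^\varepsilon}}{\D{u^\varepsilon}},\quad \fp{\vec{u^\varepsilon}\otimes\vec{g}}{\D{u^\varepsilon}},\quad \fp{(\vec{u^\varepsilon}+\vec{g})\otimes\vec{g}}{\D{u^\varepsilon}},
\end{equation*}
together with the terms arising from $\div\vec{g}\neq 0$ and the force term, are to be estimated by H\"older, the Sobolev embedding $W^{1,s}\injto L^r$ (possible by the choice of $r$), and the two extension estimates above. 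Sorting the outcomes according to which of $K_d,K_n,K_t,K_f$ they involve and whether a factor of $\eta^{1/r-1/q}$ enters via $\vec{g_{t,\eta}}$ reproduces exactly the six summands defining $L(\eta)$, so that a Young inequality absorbs the top-order $\norm{\D{u^\varepsilon}}_p^p$ into the coercive term; the parameter $\eta$ is then fixed at the minimizer of $L$, as noted after the condition.

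Finally, the uniform bound yields $\vec{u^\varepsilon}\weakto\vec{u}$ in $V_p$, convergence of the convective term via compact Sobolev embedding on $\vec{v^\varepsilon}$, and vanishing of the $\varepsilon$-regularization since $\varepsilon^{1/s}\norm{\D{u^\varepsilon}}_s\to 0$. The limit of $\S(\D{v^\varepsilon})$ is identified as $\S(\D{v})$ by the Lipschitz truncation method of~\cite{fms2,dms} combined with the strict monotonicity in Assumption~\ref{assum:stress_tensor}. I expect the main obstacle to be the a priori estimate and its careful bookkeeping of the six structurally distinct terms that compose $L(\eta)$; once that is in hand, the compactness and convergence steps are essentially routine pseudomonotone/Lipschitz truncation machinery.
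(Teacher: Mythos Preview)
Your architecture---lift the data as $\vec{g}=\vec{k}+\vec{h_\eta}$, reduce to a homogeneous problem for $\vec{u}\in V_p$, regularize, and pass to the limit via Lipschitz truncation---is the one the paper uses. The gap is in the middle: how the regularized problem is solved and how Condition~\ref{cond:data_small} actually enters. Testing with $\vec{u}$ produces (Lemmas~\ref{lem:properties_s}--\ref{lem:lower_estimate})
\[
\fp{\vec{S}(\vec{u})-\vec{T}(\vec{u})-\vec{f}}{\vec{u}}\ \ge\ G_1\|\D{u}\|_p^{\,p}-G_2\|\D{u}\|_p^{\,2}-G_3\|\D{u}\|_p\,,
\]
and since $p<2$ the quadratic convective contribution $G_2\|\D{u}\|_p^{\,2}$ dominates at infinity; it cannot be ``absorbed by a Young inequality'' into the $p$-th-power term, and your regularization $\varepsilon\,|\D{u}|^{s-2}\D{u}$ does not restore global coercivity either, because $s\le 2$ for all admissible $p$ when $d=2$ and for part of the $d=3$ range. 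What Condition~\ref{cond:data_small} delivers instead is the algebraic inequality $G_1\ge 2\,G_2^{\,p-1}G_3^{\,2-p}$, which makes the right-hand side above nonnegative at the \emph{single} radius $R=(2G_3/G_1)^{1/(p-1)}$: this is \emph{local}, not global, coercivity, and the six summands of $L(\eta)$ arise from expanding $G_2^{\,p-1}G_3^{\,2-p}$ via Lemma~\ref{lem:reverse_jensen}, not from an absorption step. To convert this into existence of an approximate solution the paper regularizes with an exponent $\sigma>\max\{s,2\}$ and equips $V_\sigma$ with the mixed norm $\max\bigl\{n^{-2/(2\sigma-1)}\|\D{u}\|_\sigma,\|\D{u}\|_p\bigr\}$; on its sphere of radius $R$ either $\|\D{u}\|_p=R$ and Condition~\ref{cond:data_small} applies directly, or $\|\D{u}\|_\sigma=n^{2/(2\sigma-1)}R$ and the $\sigma$-Laplacian term (crucially with $\sigma>2$) dominates for large $n$. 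Without this two-branch argument you cannot conclude that an approximate solution with $\|\D{u^n}\|_p\le R$ even exists.

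One smaller inaccuracy: the cut-off extension of $\vec{g_t}$ does \emph{not} have $W^{1,p}$-norm uniformly bounded by $K_t$; Lemma~\ref{lem:extension_tang} gives $\|\vec{h_\eta}\|_{1,p}\le c\,(1+\eta^{1/p-1/q-1})K_t$, which blows up as $\eta\to 0$. This trade-off between small $L^r$-norm and large gradient norm is exactly why $L(\eta)$ contains both positive and negative powers of $\eta$ and does not simply vanish in the limit $\eta\to 0$.
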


\section{Extensions of boundary and divergence data}\label{sec:extension}

In the existence proof, we write the solution $\vec{v}$ as a sum $\vec{u} + \vec{g}$, where $\vec{u}$ is solenoidal and has zero boundary values. The subsequent estimates require the natural $W^{1,p}(\Omega)$ regularity of $\vec{g}$ as well as additional integrability $\vec{g} \in L^r(\Omega)$ with $r > p\d$ (at least for low values of $p$).
In order to get an efficient a priori estimate, establishing an extension $\vec{g}$ with small $L^r$-norm is crucial. To this end, we require $\vec{g} \in L^q(\Omega)$ for some $q >r$ for the tangential boundary data and then perform a cutoff argument. 


The main result of the section is as follows and will be proved at the end of the section:
\begin{prop} \label{prop:s:fs}
	Let $\Omega \subset \Rd$ be a bounded domain, let
	Assumption~\ref{assum:growth} be fulfilled and let $\eta > 0$.
	Then, there are extensions $\vec{h_\eta}, \vec{k} \in
	W^{1,p}(\Omega) \cap L^r(\Omega)$~such that 
	\begin{alignat*}{2}
	\div \vec{h_\eta} &= 0 && \textrm{in } \Omega\,,\\
	\tr \vec{h_\eta} &= \vec{g_t} && \textrm{on } \del\Omega\,,\\
	\norm{\vec{h_\eta}}_r 		&\leq c(\Omega, p, q) \eta^{\tfrac{1}{r}-\tfrac{1}{q}} K_t\,,\\
	\norm{\vec{h_\eta}}_{1,p} 	&\leq c(\Omega, p, q) \left[ 1 + \eta^{\tfrac{1}{p}-\tfrac{1}{q}-1} \right] K_t
	\end{alignat*}
	and
	\begin{alignat*}{2}
	\div \vec{k} &= g_1  && \textrm{in } \Omega\,, \\
	\tr \vec{k} &= \vec{g_n} && \textrm{on } \del\Omega\,, \\
	\max \{\norm{\vec{k}}_{1,p}, \norm{\vec{k}}_{r}\}
	&\leq c(\Omega, p) \left[ K_n + K_d \right]\,.
	\end{alignat*}
\end{prop}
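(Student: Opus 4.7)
The plan is to build $\vec{k}$ and $\vec{h_\eta}$ separately, each from a trace lifting of the relevant piece of the boundary data combined with a divergence-correcting step; for $\vec{h_\eta}$ a boundary-adapted cutoff of width $\eta$ is inserted to generate the $L^r$-smallness with the prescribed scaling.

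I would construct $\vec{k}$ by first lifting $\vec{g_n}$ into the interior. In the $C^{1,1}$ setting the two trace conditions in $K_n$ can be lifted independently to a $W^{1,p}(\Omega)$- and an $L^r(\Omega)$-part; in the merely Lipschitz setting the single $W^{1-1/s,s}$-trace is lifted to $W^{1,s}(\Omega)$, which embeds continuously into $W^{1,p}(\Omega)\cap L^r(\Omega)$ by the Remark after Assumption~\ref{assum:growth}. Call the result $\vec{k_1}$. Since $\vec{g_t}\dotn=0$, the compatibility condition~\eqref{eq:s:compatibility} gives
\begin{equation*}
\intom (g_1 - \div\vec{k_1}) \dx = \intom g_1 \dx - \intdelom \vec{g_n}\dotn \ds = \intom g_1 \dx - \intdelom \vec{g_2}\dotn \ds = 0,
\end{equation*}
so the Bogovski\u{\i} operator produces $\vec{k_2}\in W^{1,s}_0(\Omega)\injto L^r(\Omega)$ with $\div\vec{k_2} = g_1-\div\vec{k_1}$ and $\norm{\vec{k_2}}_{1,s}\leq c(K_n+K_d)$. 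Setting $\vec{k}:=\vec{k_1}+\vec{k_2}$ yields the stated bounds.

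For $\vec{h_\eta}$, I would first construct an auxiliary lifting $\vec{E}\in W^{1,p}(\Omega)\cap L^q(\Omega)$ of $\vec{g_t}$ by the same two-regularity argument. Pick a cutoff $\psi_\eta\in C^\infty(\closed{\Omega})$ with $\psi_\eta\equiv 1$ on $\del\Omega$, $\supp\psi_\eta\subset\{x\in\Omega\colon\dist{x}{\del\Omega}\leq c\eta\}$ and $\abs{\nabla\psi_\eta}\leq c\eta^{-1}$, so that $\abs{\supp\psi_\eta}\leq c\eta$. H\"older's inequality on the thin support yields
\begin{equation*}
\norm{\psi_\eta\vec{E}}_r \leq c\eta^{\tfrac{1}{r}-\tfrac{1}{q}} K_t, \qquad \norm{\nabla\psi_\eta\cdot\vec{E}}_p \leq c\eta^{\tfrac{1}{p}-\tfrac{1}{q}-1} K_t,
\end{equation*}
which together with $\norm{\psi_\eta\nabla\vec{E}}_p\leq\norm{\nabla\vec{E}}_p\leq cK_t$ delivers the required bounds for $\psi_\eta\vec{E}$. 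Since $\div(\psi_\eta\vec{E})$ has zero mean (divergence theorem combined with $\vec{g_t}\dotn=0$), a Bogovski\u{\i}-type correction $\vec{c}$ supplies the solenoidal extension $\vec{h_\eta}:=\psi_\eta\vec{E}-\vec{c}$, whose trace is $\vec{g_t}$ because $\psi_\eta\equiv 1$ on $\del\Omega$ and $\vec{c}$ vanishes there.

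The main obstacle is that the correction $\vec{c}$ must preserve the $L^r$-smallness with the scaling $\eta^{1/r-1/q}$. A plain application of the Bogovski\u{\i}\ operator only gives $\vec{c}\in W^{1,p}_0(\Omega)\injto L^{p\d}(\Omega)$, and $r$ is not in general dominated by $p\d$ on the range $p\in(\tfrac{2d}{d+1},2)$, so the desired $L^r$-smallness cannot be read off. To overcome this I would localise the Bogovski\u{\i}\ operator to the thin layer $\supp\psi_\eta$, so that the correction is itself supported there and its $L^r$-norm is recovered from an $L^q$-control via another H\"older factor $\eta^{1/r-1/q}$. An equivalent alternative is to write $\vec{E}=\operatorname{curl}\vec{\Phi}$ near $\del\Omega$ in terms of a vector potential $\vec{\Phi}$ and set $\vec{h_\eta}:=\operatorname{curl}(\psi_\eta\vec{\Phi})$, which is automatically divergence free with the correct trace and reduces all norms of $\vec{h_\eta}$ to scaled estimates of $\vec{\Phi}$ on the $\eta$-layer.
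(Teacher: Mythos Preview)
Your overall architecture matches the paper: build $\vec{k}$ by a trace lift plus a Bogovski\u{\i} correction, and build $\vec{h_\eta}$ by a lift into $W^{1,p}(\Omega)\cap L^q(\Omega)$, a boundary cutoff of width $\eta$, and a divergence correction. You have also put your finger on exactly the right difficulty, namely that the correction $\vec{c}$ must inherit the $L^r$-smallness with scaling $\eta^{1/r-1/q}$, and that the route $W_0^{1,p}\injto L^{p\d}$ does not deliver this.

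Neither of your two fixes is solid, though. Localising Bogovski\u{\i} to the thin shell $\{\dist{x}{\del\Omega}<2\eta\}$ requires a Bogovski\u{\i} constant uniform in $\eta$; a scaling computation on a model box $(0,1)^{d-1}\times(0,\eta)$ shows that the $L^p\to W_0^{1,p}$ constant blows up like $\eta^{-1}$, which destroys the gain. And even if it did not, your argument only produces a $W^{1,p}$-bound on $\vec{c}$, not the $L^q$-bound you would need before applying H\"older on the thin support. The vector-potential alternative presupposes that $\vec{E}$ is divergence-free near $\del\Omega$ (it is merely a trace lift of $\vec{g_t}$) and that a potential $\vec{\Phi}$ with simultaneous $W^{2,p}\cap W^{1,q}$ regularity is available; neither is justified.

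The paper avoids all of this by observing (Theorem~\ref{thm:bogovski2}) that the Bogovski\u{\i} integral operator on the \emph{fixed} domain $\Omega$ is bounded $L^r(\Omega)\to L^r(\Omega)$ for every $r\in(1,\infty)$; this is the standard Calder\'on--Zygmund estimate for the explicit Bogovski\u{\i} kernel. One therefore applies the \emph{same} correction operator $\E$ on all of $\Omega$ and reads off
\[
\norm{\E(\psi_\eta\vec{E})}_r \leq c(\Omega,r)\,\norm{\psi_\eta\vec{E}}_r \leq c\,\eta^{1/r-1/q}K_t
\]
directly, with no localisation and no potential. This $L^r\to L^r$ mapping property is the missing ingredient in your argument.

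A secondary point: ``lift the two trace conditions independently'' is too vague to produce a \emph{single} function satisfying both bounds. In the $C^{1,1}$ case the paper takes the unique solution of the inhomogeneous Stokes problem, which enjoys both the weak $W^{1,p}$-estimate and the very-weak $L^r$-estimate simultaneously (Proposition~\ref{prop:s:fs_reg}); in the Lipschitz case it uses the harmonic extension, whose $L^q$-bound comes from Dahlberg's estimate for the Poisson integral (Proposition~\ref{prop:s:fs_lipschitz}).
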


\begin{rem}
	The estimates can be refined, e.g. s.t. the norm $\norm{\vec{h_\eta}}_r$ does not depend on the fractional derivative of $\vec{g_t}$ but only on the $W^{-\tfrac{1}{q},q}(\del\Omega)$/ the $L^q(\del\Omega)$-norm.
\end{rem}

Existence of appropriate extensions in the intersection space $W^{1,p}(\Omega) \cap L^q(\Omega)$ is guaranteed by the following two Propositions. The regularity requirements on the data differ slightly, depending on the regularity of the domain. If $\del \Omega \in C^{1,1}$, we get an optimal result via the theory of very weak solutions of the inhomogeneous Stokes equations:
\begin{prop} \label{prop:s:fs_reg}
	Let $p, t \in (1, \infty)$, $t = s\d$ and $\del \Omega \in C^{1,1}$.
	For $g_1 \in L^p(\Omega) \cap L^s(\Omega)$ and $\vec{g_2} \in W^{1-\tfrac{1}{p},p}(\del\Omega) \cap W^{-\tfrac{1}{t},t}(\del\Omega)$ that satisfy~\eqref{eq:s:compatibility}, there is an extension $\vec{g} \in W^{1,p}(\Omega) \cap L^t(\Omega)$ which satisfies
	\begin{align*}
	\div \vec{g} &= g_1 && \textrm{in } \Omega\,,\\
	\tr \vec{g} &= \vec{g_2} && \textrm{on } \del\Omega\,,\\
	\norm{\vec{g}}_{1,p} &\leq C (\norm{g_1}_p + \norm{\vec{g_2}}_{1-\tfrac{1}{p},p})\,,\\
	\norm{\vec{g}}_{t} &\leq C (\norm{g_1}_s + \norm{\vec{g_2}}_{-\tfrac{1}{t},t})\,.
	\end{align*}
\end{prop}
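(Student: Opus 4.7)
The plan is to obtain $\vec{g}$ as the solution of the inhomogeneous Stokes system
\begin{equation*}
-\Delta \vec{g} + \nabla \pi = \vec{0}, \quad \div \vec{g} = g_1 \text{ in } \Omega, \quad \vec{g} = \vec{g_2} \text{ on } \del\Omega,
\end{equation*}
and to extract the two norm estimates from two different regularity theories for this single problem. The $C^{1,1}$-regularity of $\del\Omega$ is decisive because both theories ultimately hinge on $W^{2,q}$-regularity of the adjoint Stokes problem.

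First I would invoke the classical $L^p$-theory for the inhomogeneous Stokes equations on $C^{1,1}$-domains (Cattabriga--Amrouche--Girault). Since $g_1 \in L^p(\Omega)$ and $\vec{g_2} \in W^{1-\tfrac{1}{p},p}(\del\Omega)$ fulfil the compatibility condition~\eqref{eq:s:compatibility}, this yields a unique weak solution $(\vec{g},\pi) \in W^{1,p}(\Omega) \times L_0^p(\Omega)$ together with the bound $\norm{\vec{g}}_{1,p} \leq C (\norm{g_1}_p + \norm{\vec{g_2}}_{1-1/p,p})$.

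Next I would apply the theory of very weak solutions to the same system (Amrouche--Rodríguez-Bellido, Farwig--Kozono--Sohr, Galdi). The relation $t = s\d$ yields $1/t' = 1/s' + 1/d$, so that the Sobolev embedding $W_0^{1,t'}(\Omega) \injto L^{s'}(\Omega)$ dualises to $L^s(\Omega) \injto W^{-1,t}(\Omega)$; in particular $g_1$ is admissible as very weak divergence datum. The very weak theory on $C^{1,1}$-domains then provides a very weak solution $\tilde{\vec{g}} \in L^t(\Omega)$ obeying $\norm{\tilde{\vec{g}}}_t \leq C(\norm{g_1}_s + \norm{\vec{g_2}}_{-1/t,t})$.

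Finally I would identify the two solutions. The weak solution $\vec{g} \in W^{1,p}(\Omega)$ has trace $\vec{g_2}$ and divergence $g_1$, so integration by parts against any test field $\vec{\phi} \in W^{2,t'}(\Omega) \cap W_0^{1,t'}(\Omega)$ with $\div \vec{\phi} = 0$ shows that $\vec{g}$ also satisfies the very weak formulation. Uniqueness in the very weak class then forces $\vec{g} = \tilde{\vec{g}}$, so the two bounds apply to the same function and the proposition follows. The main technical point in this programme is matching the exponents via $L^s \injto W^{-1,t}$ and verifying that the $W^{1,p}$-solution qualifies as a very weak solution in spite of the nontrivial divergence; once the $C^{1,1}$-regularity guarantees the requisite $W^{2,t'}$-regularity of the adjoint problem, both facts are routine.
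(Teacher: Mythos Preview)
Your proposal is correct and follows essentially the same route as the paper: construct $\vec{g}$ as the (weak) solution of the inhomogeneous Stokes system with data $(g_1,\vec{g_2})$, obtain the $W^{1,p}$-bound from the classical $L^p$-theory, the $L^t$-bound from the very weak theory (using $t=s\d$), and identify the two solutions via uniqueness of very weak solutions. The paper's own argument is just a terser version of exactly this.
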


If the boundary $\del\Omega$ is only Lipschitz continuous, potential estimates on the Dirichlet Laplace problem yield a weaker assertion:
\begin{prop} \label{prop:s:fs_lipschitz}
	Let $p \in [\tfrac{2d}{d+1}, \tfrac{2d}{d-1}]$, $t \geq 2$, $\del \Omega \in C^{0,1}$ and $\vec{g_2} \in W^{1-\tfrac{1}{p},p}(\del\Omega) \cap L^t(\del\Omega)$.
	Then there is an extension $\vec{g} \in W^{1,p}(\Omega) \cap L^t(\Omega)$ s.t.
	\begin{align*}
	\tr \vec{g} &= \vec{g_2}\,,\\
	\norm{\vec{g}}_{1,p} &\leq C \norm{\vec{g_2}}_{1-\tfrac{1}{p},p}\,,\\
	\norm{\vec{g}}_{t} &\leq C \norm{\vec{g_2}}_{t}\,.
	\end{align*}
\end{prop}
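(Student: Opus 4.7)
My plan is to take $\vec{g}$ as the componentwise harmonic extension of $\vec{g_2}$ and to derive both bounds from Dirichlet Laplace theory on bounded Lipschitz domains. For each Cartesian component $i$, I would solve
\[
-\Delta g^i = 0 \quad \text{in } \Omega, \qquad g^i = g_2^i \quad \text{on } \del\Omega.
\]
The first bound $\norm{\vec{g}}_{1,p} \leq C\norm{\vec{g_2}}_{1-\tfrac{1}{p},p}$ then follows from the Jerison--Kenig theory of the $W^{1,p}$-Dirichlet Laplace problem on bounded Lipschitz domains. The range $p \in [\tfrac{2d}{d+1},\tfrac{2d}{d-1}]$ stated in the proposition is precisely the one where the corresponding potential estimate is available on arbitrary bounded Lipschitz domains; it is the dual pair, via the trace embedding at $p=2$, of the universally-admissible $L^p$-Dirichlet range of Dahlberg--Verchota--Kenig.

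For the $L^t$ bound I would use the Poisson-integral representation $g^i(x) = \intdelom P(x,y)\, g_2^i(y) \ds(y)$, which extends to $L^t(\del\Omega)$ boundary data with $t \geq 2$ via Dahlberg's theorem. Since $\intdelom P(x,y)\ds(y) = 1$, Jensen's inequality and Fubini's theorem produce
\[
\intom \abs{\vec{g}(x)}^t \dx \;\leq\; \intdelom \abs{\vec{g_2}(y)}^t \, F(y) \ds(y), \qquad F(y) := \intom P(x,y) \dx.
\]
A direct integration using the Lipschitz-domain pointwise bound $P(x,y) \lesssim \dist{x}{\del\Omega}\cdot \abs{x-y}^{-d}$ shows that $F \in L^\infty(\del\Omega)$ uniformly in $y$, whence $\norm{\vec{g}}_t \leq C(\Omega,t) \, \norm{\vec{g_2}}_{L^t(\del\Omega)}$.

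The hard part I expect is justifying the $W^{1,p}$ Dirichlet estimate at the exact endpoints of the stated range on a general bounded Lipschitz domain, since classical Jerison--Kenig results are typically formulated on an open interval around $p=2$ whose size depends on the Lipschitz character of $\Omega$. If this causes trouble, I would restrict to a slightly smaller closed sub-interval—which already covers the range actually needed in the subsequent existence proof—or recover the endpoints a posteriori by real interpolation between the $W^{1,p}$ estimate on the interior range and the $L^t$ estimate established above.
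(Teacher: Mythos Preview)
Your strategy---componentwise harmonic extension, $W^{1,p}$ bound from Jerison--Kenig, $L^t$ bound via the Poisson integral---is exactly the paper's, but your execution of the $L^t$ step has a genuine gap. The pointwise bound $P(x,y)\lesssim\dist{x}{\del\Omega}\,\abs{x-y}^{-d}$ is \emph{not} available on general Lipschitz domains; it needs roughly $C^{1,\textrm{Dini}}$ boundary regularity. On a planar sector of opening $\beta>\pi$ the Green function with fixed interior pole decays only like $r^{\pi/\beta}$ near the vertex, so the Poisson kernel blows up like $r^{\pi/\beta-1}$ along the edges. Your quantity $F(y)=\intom P(x,y)\dx$ equals $-\del_n v(y)$ for the torsion function $v$ solving $-\Delta v=1$, $v|_{\del\Omega}=0$, and this normal derivative is unbounded near reentrant corners, so $F\notin L^\infty(\del\Omega)$ and the Jensen--Fubini argument collapses. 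Note that the restriction $t\ge 2$ in the statement is precisely the signature of Dahlberg's theorem (reverse H\"older for the harmonic-measure density); if your kernel argument worked it would give every $t\ge 1$. The paper instead invokes Dahlberg's 1979 result directly for $\norm{u}_t\le C\norm{g_2}_t$ and then identifies the Poisson integral with the unique $W^{1,p}$ weak solution, using continuity of the harmonic extension for continuous boundary data on NTA domains plus a density approximation in $W^{1-1/p,p}(\del\Omega)\cap L^t(\del\Omega)$.

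On your endpoint worry: the Jerison--Kenig range is indeed an open interval about $p=2$ whose width depends on the Lipschitz character, but that open interval always \emph{contains} the closed interval $\big[\tfrac{2d}{d+1},\tfrac{2d}{d-1}\big]$, so the stated endpoints are already covered and no interpolation patch is needed.
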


The proofs of both Propositions \ref{prop:s:fs_reg} and \ref{prop:s:fs_lipschitz} are given in the Appendix. 

For the cutoff procedure, we present a well-known result on divergence
correction in a slightly more general way. Therefore, we define the
space
$$
H_{0,p} := \{ \vec{u} \in L^p(\Omega)\colon \div \vec{u} \in
L^p(\Omega), \vec{u} \dotn = 0 \textrm{ a.e. on } \Omega \}\,.
$$
We note that the space $C_0^\infty(\Omega)$ is dense in $H_{0,p}(\Omega)$ (cf.~\cite[Thm.~III.2.4]{Galdi}).
\begin{thm}[Divergence correction operator]\label{thm:bogovski2}
	Let $\Omega \subset \Rd$ be a bounded Lipschitz domain with $d \geq 2$ and $p,r \in (1,\infty)$. Then there exists a linear and bounded operator $\E \colon H_{0,p}(\Omega) \cap L^r(\Omega) \to W_0^{1,p}(\Omega) \cap L^r(\Omega)$ such that
	\begin{align*}
	\div \E \vec{f} &= \div \vec{f}\,, \\
	\norm{\E \vec{f}}_r &\leq c(\Omega, r) \norm{\vec{f}}_r\,,\\
	\norm{\E \vec{f}}_{1,p} &\leq c(\Omega, p) \norm{\div \vec{f}}_p
	\end{align*}
	holds for all $\vec{f} \in H_{0,p}(\Omega) \cap L^r(\Omega)$.
\end{thm}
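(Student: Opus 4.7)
The natural candidate is $\E\vec{f} := \Bog(\div \vec{f})$, where $\Bog \colon L^p_0(\Omega) \to W^{1,p}_0(\Omega)$ is the classical Bogovski\u{\i} operator. The first step is to check that this is well-defined: for $\vec{f}\in H_{0,p}(\Omega)$ we have $\div\vec{f}\in L^p(\Omega)$, and the generalized Gauss formula (valid on $H_{0,p}$ since $C_0^\infty(\Omega)$ is dense there, as noted just before the statement) gives $\int_\Omega \div\vec{f}\dx = \int_{\del\Omega}\vec{f}\dotn\ds = 0$, so $\div\vec{f}\in L^p_0(\Omega)$. Bogovski\u{\i}'s theorem then furnishes $\E\vec{f}\in W^{1,p}_0(\Omega)$ with $\div\E\vec{f} = \div\vec{f}$ and $\norm{\E\vec{f}}_{1,p}\leq c(\Omega,p)\norm{\div\vec{f}}_p$, yielding the first and third asserted properties.

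The main obstacle is the $L^r$-bound $\norm{\E\vec{f}}_r\leq c(\Omega,r)\norm{\vec{f}}_r$, which cannot be read off from the standard Bogovski\u{\i} estimate: the latter controls $\E\vec{f}$ only through $\div\vec{f}$, whose $L^r$-norm is not available. My plan is to exploit the explicit integral representation of $\Bog$ on star-shaped domains (reducing the general Lipschitz case by a finite partition of unity into star-shaped pieces, as in Galdi's construction of $\Bog$). For $\vec{f}\in C_0^\infty(\Omega)$, an integration by parts inside the Bogovski\u{\i} kernel transfers the derivative from $\vec{f}$ to the kernel itself, rewriting $\E\vec{f}(x)$ as a singular integral operator acting directly on $\vec{f}$; this operator turns out to have a Calder\'on--Zygmund-type kernel, and classical CZ theory then yields the desired $L^r\to L^r$ bound for all $r\in(1,\infty)$. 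Verifying the kernel cancellation and size conditions and treating boundary contributions carefully is the delicate point.

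Finally, I would extend $\E$ by density. Since $C_0^\infty(\Omega)$ is dense in $H_{0,p}(\Omega)$ and also in $L^r(\Omega)$, any $\vec{f}\in H_{0,p}(\Omega)\cap L^r(\Omega)$ can be approximated by smooth compactly supported fields simultaneously in both norms (via mollification combined with an appropriate cutoff near $\del\Omega$ that preserves the vanishing normal trace). Linearity together with the two estimates obtained above then extends $\E$ uniquely to a bounded linear operator on the full intersection space. In practice, since this construction is classical, I would expect the authors either to invoke Galdi's book or the results of Gei{\ss}ert--Heck--Hieber on the $L^r$-theory of the Bogovski\u{\i} operator, rather than to redo the CZ estimate from scratch.
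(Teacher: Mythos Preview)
Your proposal is correct and essentially identical to the paper's approach: define $\E\vec{f}=\Bog(\div\vec{f})$, obtain the $W^{1,p}$ estimate from the standard Bogovski\u{\i} theorem, obtain the $L^r\to L^r$ bound from the Calder\'on--Zygmund structure of the Bogovski\u{\i} kernel, and extend by density from $C_0^\infty(\Omega)$ to $H_{0,p}(\Omega)\cap L^r(\Omega)$. The only difference is presentational: exactly as you anticipated in your final paragraph, the paper does not redo the CZ estimate but simply invokes \cite[Thm.~III.3.3]{Galdi}, which already records the bound $\norm{\Bog(\div\vec{\tilde f})}_r\le c(\Omega,r)\norm{\vec{\tilde f}}_r$ for $\vec{\tilde f}\in C_0^\infty(\Omega)$, and then passes to the limit.
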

\begin{proof}
	The proof is a consequence of the ''standard'' formulation of this theorem when $r=p$, cf.~e.g.~\cite[Thm. III.3.3]{Galdi}.
	According to~\cite{Galdi}, for any smooth $\vec{\tilde{f}} \in C_0^{\infty}(\Omega)$, we find a solution $\vec{v} \in C_0^{\infty}(\Omega)$ which satisfies
	\begin{equation*}
	\div \vec{v} = \div \vec{\tilde{f}}\,, \quad
	\norm{\vec{v}}_r \leq c(\Omega, r) \, \bignorm{\vec{\tilde{f}}}_r\,, \quad
	\norm{\vec{v}}_{1,p} \leq c(\Omega, p) \, \bignorm{\div \vec{\tilde{f}}}_p\,.
	\end{equation*}
	As its explicit construction reveals, $\vec{v}$ depends linearly on $\vec{\tilde{f}}$. Hence, we have a linear solution operator $\tilde{\E}\colon C_0^{\infty}(\Omega) \to C_0^{\infty}(\Omega)$.	
	The estimates on $\tilde{\E}$ yield its continuity. Thus, its extendability to the space $H_{0,p}(\Omega) \cap L^r(\Omega)$ follows by density. 
\end{proof}

\begin{lem}\label{lem:extension_tang}
	Let $\eta > 0$, $q > r \geq p$ and $\vec{h} \in W^{1,p}(\Omega) \cap L^q(\Omega)$ with $\vec{h} \dotn = 0$ a.e. on~$\partial\Omega$. Then there exists $\vec{h_\eta} \in W^{1,p}(\Omega) \cap L^r(\Omega)$ which satisfies
	\begin{equation}\begin{split}\label{eq:estim_eta}
	\div \vec{h_\eta} 			&= 0\,,\\
	\tr  \vec{h_\eta} 			&= \tr \vec{h}\,,\\
	\norm{\vec{h_\eta}}_r 		&\leq c(\Omega, p, q, r) \, \eta^{\tfrac{1}{r}-\tfrac{1}{q}} \norm{\vec{h}}_{q}\,,\\
	\norm{\nabla\vec{h_\eta}}_{p} 	&\leq c(\Omega, p, q, r) \left[ \norm{\nabla\vec{h}}_p + \eta^{\tfrac{1}{p}-\tfrac{1}{q}-1} \norm{\vec{h}}_{q} \right]\,.
	\end{split}\end{equation}
\end{lem}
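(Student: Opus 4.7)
The strategy is a cutoff-and-correct argument: localize $\vec{h}$ to a boundary strip of width $\eta$ via a cutoff function, then restore the divergence-free property by subtracting a correction produced by the operator $\E$ of Theorem~\ref{thm:bogovski2}. Because $\E$ maps into $W_0^{1,p}(\Omega)$, the correction does not disturb the trace.

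I first fix a Lipschitz cutoff function $\chi_\eta \colon \closed{\Omega} \to [0,1]$ with $\chi_\eta \equiv 1$ in a neighborhood of $\del\Omega$, $\supp \chi_\eta \subset \{x \in \closed{\Omega} \colon \dist{x}{\del\Omega} \leq \eta\}$, and $\abs{\nabla \chi_\eta} \leq c(\Omega)/\eta$ almost everywhere; such a $\chi_\eta$ is standard on any bounded Lipschitz domain (e.g.\ by mollifying a suitable truncation of the signed distance function). Setting $\vec{h}^0 := \chi_\eta \vec{h}$, one has $\tr \vec{h}^0 = \tr \vec{h}$, and in particular $\vec{h}^0 \dotn = 0$ on $\del\Omega$, so that $\vec{h}^0 \in H_{0,p}(\Omega) \cap L^r(\Omega)$; this is precisely the hypothesis needed to apply Theorem~\ref{thm:bogovski2}.

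The main technical care is in tracking powers of $\eta$. Since $\abs{\supp \chi_\eta} \leq c(\Omega)\,\eta$, Hölder's inequality with exponents $q/r$ and $q/(q-r)$ gives
\[
\norm{\vec{h}^0}_r \leq \abs{\supp \chi_\eta}^{\tfrac{1}{r}-\tfrac{1}{q}}\norm{\vec{h}}_q \leq c(\Omega, p, q, r)\,\eta^{\tfrac{1}{r}-\tfrac{1}{q}}\norm{\vec{h}}_q\,.
\]
From $\nabla\vec{h}^0 = \chi_\eta\,\nabla\vec{h} + \vec{h}\otimes \nabla \chi_\eta$ the first term is dominated by $\norm{\nabla\vec{h}}_p$, while the analogous Hölder estimate with exponents $q/p$ and $q/(q-p)$, combined with $\abs{\nabla \chi_\eta} \leq c/\eta$, bounds the second term by $c\,\eta^{\tfrac{1}{p}-\tfrac{1}{q}-1}\norm{\vec{h}}_q$. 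In particular $\norm{\div \vec{h}^0}_p$ is controlled by the same expression.

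Finally I set $\vec{h_\eta} := \vec{h}^0 - \E \vec{h}^0$. Then $\div \vec{h_\eta} = 0$ by the defining property of $\E$, while $\tr \E \vec{h}^0 = 0$ since $\E \vec{h}^0 \in W_0^{1,p}(\Omega)$, so $\tr \vec{h_\eta} = \tr \vec{h}$. Plugging the estimates of the previous paragraph into $\norm{\E \vec{h}^0}_r \leq c\norm{\vec{h}^0}_r$ and $\norm{\E \vec{h}^0}_{1,p} \leq c\norm{\div \vec{h}^0}_p$ from Theorem~\ref{thm:bogovski2} and using the triangle inequality yields the four inequalities of~\eqref{eq:estim_eta}. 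The one place where a genuine hypothesis is consumed, rather than routine Hölder bookkeeping, is the application of Theorem~\ref{thm:bogovski2} to $\vec{h}^0$, which requires exactly the condition $\vec{h} \dotn = 0$ that is assumed.
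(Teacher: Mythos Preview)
Your proposal is correct and follows essentially the same route as the paper: multiply $\vec{h}$ by a Lipschitz cutoff supported in an $\eta$-neighborhood of $\del\Omega$, estimate the resulting $L^r$- and $W^{1,p}$-norms via H\"older's inequality using the volume of the support, and then subtract the divergence correction $\E$ from Theorem~\ref{thm:bogovski2}, whose applicability rests on the assumption $\vec{h}\dotn=0$. The only cosmetic difference is that the paper writes down the explicit piecewise-linear cutoff in the distance function (with support up to $2\eta$) rather than invoking an abstract mollified cutoff.
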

\begin{proof}
	The core of this construction is due to~\cite{lan09}. For the convenience of the reader and in order to reduce some technical assumptions, we give a full proof here.
	
	
	Because of the triangle inequality, the distance function
	$d\colon \closed{\Omega} \to \R_{\geq 0}$ is Lipschitz
	continuous with constant $1$. Thus, we define the cutoff function
	\begin{equation*}
	\psi_\eta(x) := \left\{ \begin{array}{ll}
	1					& d(x) \leq \eta\,, \\
	2-\eta^{-1}d(x)	& \eta \leq d(x) \leq 2 \eta\,, \\
	0					& 2 \eta \leq d(x)\,.
	\end{array} \right.
	\end{equation*}
	Clearly, we have $\psi_\eta \in W^{1,\infty}(\Omega)$ and $\norm{\psi_\eta}_\infty = 1$. Furthermore, we have $\norm{\psi_\eta}_{\ell} \leq \abs{\supp \psi_\eta}^{\tfrac{1}{\ell}} \leq (c(\Omega)\eta)^{\tfrac{1}{\ell}}$ and $\norm{\nabla\psi_\eta}_{\ell} \leq (c(\Omega)\eta)^{\tfrac{1}{\ell}-1}$ for any ${\ell} \in (1, \infty)$.

	The auxiliary function $\vec{\tilde{h_\eta}} := \psi_\eta \vec{h} \in W^{1,p}(\Omega) \cap L^r(\Omega)$ fulfills
	\begin{equation}\label{eq:h_tilde_r}
	\bignorm{\vec{\tilde{h_\eta}}}_r
	\leq \norm{\psi_\eta}_{\ell} \norm{\vec{h}}_q
	\leq (c(\Omega) \eta )^{\tfrac{1}{r}-\tfrac{1}{q}} \norm{\vec{h}}_{q}
	\end{equation}
	with $\tfrac{1}{\ell} := \tfrac{1}{r} - \tfrac{1}{q}$ due to the Hölder inequality. Similarly, we obtain with $\tfrac{1}{m} := \tfrac{1}{p} - \tfrac{1}{q}$ that
	\begin{align}
	\begin{aligned}
	\bignorm{\nabla\vec{\tilde{h_\eta}}}_p
	&\leq \norm{\nabla \psi_\eta}_{m} \norm{\vec{h}}_q + \norm{\psi_\eta}_\infty \norm{\nabla\vec{h}}_p  \\
	&\leq c(\Omega, p,q,r) \Big[\norm{\nabla\vec{h}}_p+ \eta
	^{\tfrac{1}{p}-\tfrac{1}{q}-1} \norm{\vec{h}}_{q} \Big]\,.
	\end{aligned}
	\label{eq:h_tilde_1p}
	\end{align}
	
	So, $\vec{\tilde{h_\eta}}$ has the correct boundary data and fulfills the correct estimates, but is not solenoidal. For a divergence correction, we use the operator $\E$ from Theorem~\ref{thm:bogovski2} where we employ that $\vec{\tilde{h_\eta}}$ has vanishing normal trace.
	So, we set $\vec{{h_\eta}} := \vec{\tilde{h_\eta}} - \E \vec{\tilde{h_\eta}} \in W^{1,p}(\Omega) \cap L^r(\Omega)$. The claimed estimates follow by~\eqref{eq:h_tilde_r}, \eqref{eq:h_tilde_1p} and the estimates for $\E$.
\end{proof}

\begin{proof}[Proof of Proposition \ref{prop:s:fs}]
	The construction of $\vec{h_\eta}$ and $\vec{k}$ is illustrated in Figure~\ref{fig:s:extensions_workflow}.
	
	If the boundary is sufficiently regular, i.e., $\del \Omega \in C^{1,1}$, we find extensions ${\vec{k} \in W^{1,p}(\Omega) \cap L^r(\Omega)}$ of $(g_1, \vec{g_n})$ and $\vec{h} \in W^{1,p}(\Omega) \cap L^q(\Omega)$ of $(0, \vec{g_t})$ via Proposition~\ref{prop:s:fs_reg}. For $\vec{h}$, a cutoff procedure according to Lemma~\ref{lem:extension_tang} then yields $\vec{h_\eta} \in W^{1,p}(\Omega) \cap L^r(\Omega)$.
	
	If the boundary is only Lipschitz continuous, we apply the
	trace lifting operator to extend $\vec{g_n}$ to
	$\vec{\tilde{k}} \in W^{1,s}(\Omega)$. Then we use the
	classical Bogovski\u{\i} operator to get $\vec{k} \in
	W^{1,s}(\Omega)$ such that $\div \vec{k} = g_1$, $\tr \vec{k} = \vec{g_2}$. Since $s\d \geq r$, a Sobolev embedding concludes the estimate on $\vec{k}$ (cf.~\cite{Galdi}, \cite[Lemma 2.3]{JR21_inhom} for more details).
	For the tangential boundary data $\vec{g_t}$, we apply Proposition \ref{prop:s:fs_lipschitz} to get an extension $\vec{h} \in W^{1,p}(\Omega) \cap L^q(\Omega)$. Again, a cutoff argument according to Lemma~\ref{lem:extension_tang} yields $\vec{h_\eta} \in W^{1,p}(\Omega) \cap L^r(\Omega)$.
\end{proof}

\begin{figure}[t]
	\centering
	\begin{tikzpicture}[font=\small,thick]
	
	\node[draw,
	diamond,
	minimum width=2.5cm,
	inner sep=0] (block0) { $\del\Omega \in C^{1,1}$};
	
	\node[draw,
	rounded rectangle,
	below left= 5mm and 15mm of block0,
	minimum width=2.5cm,
	minimum height=1cm,
	align=center] (block1a) { Suppose data $g_1 \in L^s(\Omega)$,\\ $\vec{g_n} \in W^{1-\tfrac{1}{p},p}(\del\Omega) \cap W^{-\tfrac{1}{r},r}(\del\Omega)$\\ $\vec{g_t} \in W^{1-\tfrac{1}{p},p}(\del\Omega) \cap W^{-\tfrac{1}{q},q}(\del\Omega)$ };
	
	\node[draw,
	rounded rectangle,
	below right= 5mm and 15mm of block0,
	minimum width=2.5cm,
	minimum height=1cm,
	align=center] (block1b) { Suppose data $g_1 \in L^s(\Omega)$,\\ $\vec{g_n} \in W^{1-\tfrac{1}{s},s}(\del\Omega)$, \\ $\vec{g_t} \in W^{1-\tfrac{1}{p},p}(\del\Omega) \cap L^q(\del\Omega)$ };
	
	\node[draw,
	below =5cm of block1a,
	minimum width=3.5cm,
	minimum height=1cm,
	align=center
	] (block2a) { Extension \\ $\vec{k} \in W^{1,p}(\Omega) \cap L^r(\Omega) $ \\ of $g_1$, $\vec{g_n}$ };
	
	\node[draw,
	below =of block1b,
	minimum width=3.5cm,
	minimum height=1cm,
	align=center
	] (block2c) { Extension \\ $\tilde{\vec{k}} \in W^{1,s}(\Omega) $ of $\vec{g_n}$ };
	
	\node[draw,
	below left = 37mm and -25mm of block0,
	minimum width=3.5cm,
	minimum height=1cm,
	align=center
	] (block2b) { Extension \\ $\vec{h} \in W^{1,p}(\Omega) \cap L^q(\Omega) $ of $\vec{g_t}$ };

	\node[draw,
	below=of block2c,
	minimum width=3.5cm,
	minimum height=1cm,
	align=center
	] (block3c) { Extension $\vec{k} \in W^{1,s}(\Omega) $ \\ of $g_1$, $\vec{g_n}$ };
	
	\node[draw,
	below= 25mm of block2b,
	minimum width=3.5cm,
	minimum height=1cm,
	align=center
	] (block3b) { Solenoidal extension \\ $\vec{h_\eta} \in W^{1,p}(\Omega) \cap L^r(\Omega) $ \\ with small $L^r$-norm. };
	
	\node[draw,
	below= of block3c,
	minimum width=3.5cm,
	minimum height=1cm,
	align=center
	] (block4c) { Extension \\ $\vec{k} \in W^{1,p}(\Omega) \cap L^r(\Omega) $ \\ of $g_1$, $\vec{g_n}$ };
	
	
	\draw[-latex] (block0) -| (block1a)
	node[pos=0.25,fill=white,inner sep=0]{Yes};
	
	\draw[-latex] (block0) -| (block1b)
	node[pos=0.25,fill=white,inner sep=0]{No};
	
	\draw[-latex] (block1b) edge node[pos=0.4,fill=white,inner sep=2pt]{Trace lift}(block2c);
	
	\draw[-latex] (block1a) edge node[pos=0.4,fill=white,inner sep=2pt,align=center]{Stokes problem \\ Prop. \ref{prop:s:fs_reg}}(block2a);
	
	\draw[-latex] (block1a) edge node[pos=0.4,fill=white,inner sep=2pt,align=center]{Stokes problem \\ Prop. \ref{prop:s:fs_reg}}(block2b);
	
	\draw[-latex] (block1b) edge node[pos=0.4,fill=white,inner sep=2pt,align=center]{Dirichlet problem \\ Prop. \ref{prop:s:fs_lipschitz}}(block2b);
	
	\draw[-latex] (block2c) edge node[pos=0.4,fill=white,inner sep=2pt]{Divergence correction}(block3c);
	
	\draw[-latex] (block2b) edge node[pos=0.4,fill=white,inner sep=2pt,align=center]{Cutoff + divergence \\ correction, Lemma \ref{lem:extension_tang}}(block3b);
	
	\draw[-latex] (block3c) edge node[pos=0.4,fill=white,inner sep=2pt]{Sobolev embedding}(block4c);
	
	\end{tikzpicture}
	\caption{Construction of the extensions of boundary and divergence data for Proposition \ref{prop:s:fs}.}
	\label{fig:s:extensions_workflow} 
\end{figure}
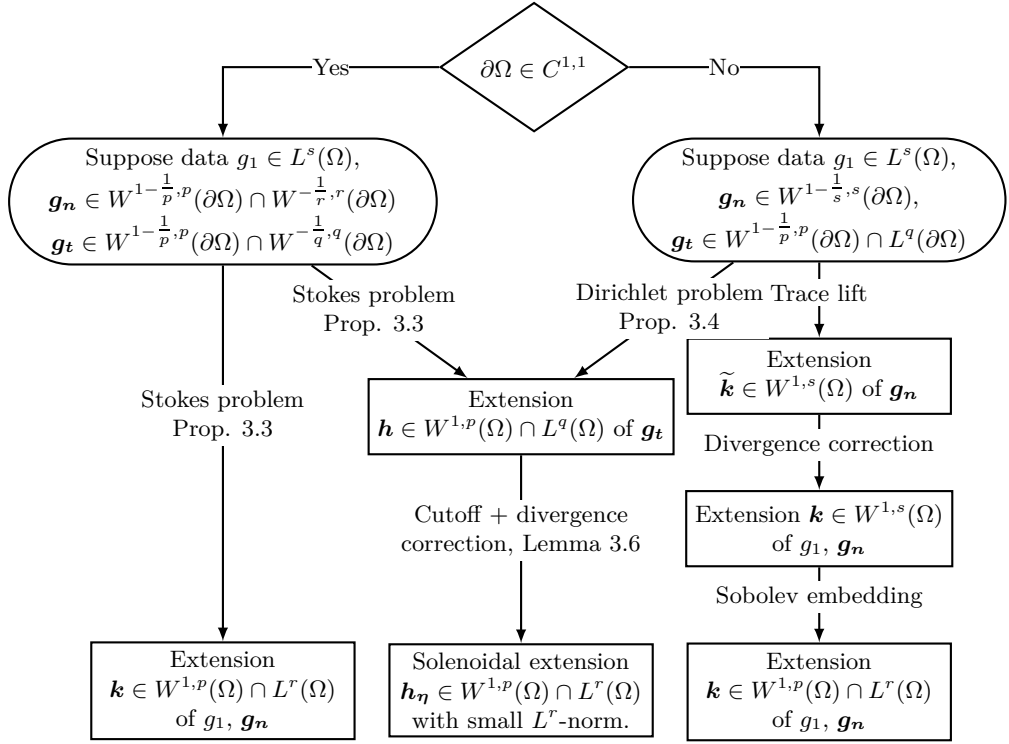

\section{Proof of the main result}\label{sec:proof}

In this section, we prove Theorem~\ref{thm:main} and therefore suppose
that the definitions and assumptions of that Theorem hold. In particular, we take the decompositions $\vec{g_2} = \vec{g_t} + \vec{g_n}$ and $\vec{g} = \vec{h_\eta} + \vec{k}$ based on the extensions from Proposition~\ref{prop:s:fs} as given.

\subsection{A priori estimate}\label{sub:estimates}

The viscous stress tensor possesses the following properties:
\begin{lem}[viscous stress tensor]\label{lem:properties_s}
	The extra stress tensor $\S$ induces a bounded and continuous operator $\vec{S}\colon W_0^{1,p}(\Omega) \to W_0^{1,p}(\Omega)\d$ via
	\begin{equation} \label{eq:defi_S}
	\fp{\vec{S}(\vec{u})}{\vec{\phi}} := \fp{\S(\vec{Du}+\vec{Dg})}{\vec{D\phi}}.
	\end{equation}
	It admits the lower bound
	\begin{equation}\label{eq:estim_s}
	\fp{\vec{S}(\vec{u})}{\vec{u}} 
	\geq G_1 \norm{\vec{Du}}_p^p - G_{31} \norm{\vec{Du}}_p 
	\end{equation}
	for all $\vec{u} \in W_0^{1,p}(\Omega)$ with constants
	\begin{equation*}
	G_1 := c(p) \, C_1(\S) \textrm{ and } G_{31} := c(\Omega, p, q, \S) \Big[ \eta^{\tfrac{1}{p}-\tfrac{1}{q}-1} K_t + K_t +  K_n +  K_d + \delta \Big]^{p-1}.
	\end{equation*}
\end{lem}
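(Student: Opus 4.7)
The plan is to handle boundedness/continuity and the lower bound separately, with the latter being the technical core.

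For boundedness and continuity, setting $\vec{B} = \vec{0}$ in the growth inequality of Assumption~\ref{assum:stress_tensor} (using $\S(\vec{0}) = \vec{0}$) yields $\abs{\S(\vec{A})} \leq c\,C_2(\S)(\delta + \abs{\vec{A}})^{p-1}$, and Hölder's inequality directly produces $\abs{\fp{\vec{S}(\vec{u})}{\vec{\phi}}} \leq c\,(\delta\abs{\Omega}^{1/p} + \norm{\vec{Du}}_p + \norm{\vec{Dg}}_p)^{p-1}\,\norm{\vec{D\phi}}_p$. For continuity, I would exploit that for $p < 2$ the bracket $(\delta + \abs{\vec{B}} + \abs{\vec{A}-\vec{B}})^{p-2}\abs{\vec{A}-\vec{B}}$ in Assumption~\ref{assum:stress_tensor} is dominated by $\abs{\vec{A}-\vec{B}}^{p-1}$, so convergence $\vec{Du}_n \to \vec{Du}$ in $L^p$ produces convergence of $\S(\vec{Du}_n+\vec{Dg})$ in $L^{p'}$ and hence of $\vec{S}(\vec{u}_n)$ in $W_0^{1,p}(\Omega)\d$.

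For the lower bound, I split
\[
\fp{\vec{S}(\vec{u})}{\vec{u}} = \fp{\S(\vec{Du}+\vec{Dg}) - \S(\vec{Dg})}{\vec{Du}} + \fp{\S(\vec{Dg})}{\vec{Du}},
\]
apply the monotonicity from Assumption~\ref{assum:stress_tensor} with $\vec{A}=\vec{Du}+\vec{Dg}$, $\vec{B}=\vec{Dg}$ to bound the first piece below pointwise by $C_1(\S)(\delta + \abs{\vec{Dg}} + \abs{\vec{Du}})^{p-2}\abs{\vec{Du}}^2$, and estimate the second piece from above via Hölder and the growth bound on $\S$, which contributes a term of the form $c(\Omega,p,\S)(\delta + \norm{\vec{Dg}}_p)^{p-1}\norm{\vec{Du}}_p$ to the error.

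The main obstacle is that for $p < 2$ the degenerate integrand in the monotone term does not dominate $\abs{\vec{Du}}^p$ pointwise. I would resolve this by the classical reverse-Hölder trick: from the identity
\[
\abs{\vec{Du}}^p = \big[\abs{\vec{Du}}^2 (\delta+\abs{\vec{Du}}+\abs{\vec{Dg}})^{p-2}\big]^{p/2} \,(\delta+\abs{\vec{Du}}+\abs{\vec{Dg}})^{p(2-p)/2},
\]
Hölder with conjugate exponents $2/p$ and $2/(2-p)$ yields
\[
\int \abs{\vec{Du}}^2 (\delta+\abs{\vec{Du}}+\abs{\vec{Dg}})^{p-2} \dx \;\geq\; \frac{\norm{\vec{Du}}_p^2}{\big(\delta\abs{\Omega}^{1/p}+\norm{\vec{Du}}_p+\norm{\vec{Dg}}_p\big)^{2-p}}.
\]
An elementary scalar inequality $a^2/(a+b)^{2-p} \geq c_p a^p - C_p b^{p-1} a$ (proved by distinguishing $a \leq b$ and $a > b$) converts this into $c(p) C_1(\S) \norm{\vec{Du}}_p^p$ minus an error linear in $\norm{\vec{Du}}_p$ with coefficient proportional to $(\delta + \norm{\vec{Dg}}_p)^{p-1}$. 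Finally, I would insert Proposition~\ref{prop:s:fs} through $\norm{\vec{Dg}}_p \leq \norm{\vec{h_\eta}}_{1,p} + \norm{\vec{k}}_{1,p} \leq c(\Omega, p, q) \big[\eta^{1/p-1/q-1} K_t + K_t + K_n + K_d\big]$; adding $\delta$ and raising to the power $p-1$ reproduces exactly the constant $G_{31}$ claimed in the lemma.
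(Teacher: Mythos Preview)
Your proposal is correct and follows essentially the same route as the paper: the paper simply cites \cite[Lemma~2.13]{JR21_inhom} for the inequality
\[
\fp{\vec{S}(\vec{u})}{\vec{u}} \geq c(p)\,C_1(\S)\norm{\vec{Du}}_p^p - c(p)\big(C_1(\S)+C_2(\S)\big)\bignorm{\abs{\vec{Dg}}+\delta}_p^{p-1}\norm{\vec{Du}}_p
\]
and then inserts the bound on $\norm{\vec{Dg}}_p$ from Proposition~\ref{prop:s:fs}, whereas you have spelled out the standard argument behind that cited lemma (the splitting $\S(\vec{Du}+\vec{Dg}) = [\S(\vec{Du}+\vec{Dg})-\S(\vec{Dg})] + \S(\vec{Dg})$, the reverse H\"older step, and the scalar inequality $a^2/(a+b)^{2-p}\geq c_p a^p - C_p b^{p-1}a$).
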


\begin{proof}
	The proof follows standard arguments which are carried out in detail e.g. in~\cite{Cetraro, JR21_inhom}.
	In \cite[Lemma 2.13]{JR21_inhom}, it is shown that Assumption
	\ref{assum:stress_tensor} implies 	for $\vec{v} \in
	W_0^{1,p}(\Omega)$ the lower bound 
	\begin{align}\label{eqh:estim_viscous}
	\begin{aligned}
	\fp{\vec{S}(\vec{u})}{\vec{u}} &\geq c(p) \, C_1(\S)
	\norm{\vec{Du}}_p^p
	\\
	&\quad - c(p) \, \big ( C_1(\S) + C_2(\S)
	\big) \bignorm{\abs{\vec{Dg}}+\delta}_p^{p-1}
	\norm{\vec{Du}}_p\,.
	\end{aligned}
	\end{align}
	We further estimate
	\begin{align*}
	\bignorm{\abs{\vec{Dg}}+\delta}_p
	&\leq \norm{\D{h_\eta}}_p + \norm{\D{k}}_p
	+ \delta \abs{\Omega}^{\tfrac{1}{p}}\\
	&\leq c(\Omega, p, q) \Big[ \eta^{\tfrac{1}{p}-\tfrac{1}{q}-1} K_t
	+ K_t +  K_n +  K_d + \delta \Big]\, .
	\end{align*}
	Inserting this into~\eqref{eqh:estim_viscous} yields the lower bound.
\end{proof}

In order to exploit the smallness of the tangential extension
$\vec{h_\eta}$ in the best possible way, we estimate the convective
term such that only the $L^r$-norm and not the gradient norm of $\vec{g}$ is involved. This step is responsible for the lower bound $p > \tfrac{2d}{d+1}$.

\begin{lem}[convective term]\label{lem:properties_t}
	The convective term induces a bounded and continuous operator $\vec{T}\colon V_p \to W_0^{1,s}(\Omega)\d$ defined via
	\begin{equation} \label{eq:defi_T}
	\fp{\vec{T}(\vec{u})}{\vec{\phi}}
	:= - \fp{(\vec{u}+\vec{g}) \otimes (\vec{u}+\vec{g})}{\vec{D\phi}}\,.
	\end{equation}
	For $\sigma \geq s$ with $\sigma > \big (\frac{p\d}{2}\big )'$, $\vec{T}$ is a strongly continuous operator from $V_p$ to $W_0^{1,\sigma}(\Omega)$. If $\vec{u} \in V_\sigma$, we have
	\begin{equation}\label{eq:estim_t}
	\abs{\fp{\vec{T}(\vec{u})}{\vec{u}}}
	\leq G_2 \norm{\D{u}}_p^2 + G_{32} \norm{\D{u}}_p
	\end{equation}
	with constants
	\begin{equation*}
	G_2 := c(\Omega, p, q) \Big[K_d + K_n + \eta^{\tfrac{1}{r}-\tfrac{1}{q}} K_t \Big] \textrm{ and } G_{32} := c(\Omega, p, q) \Big[K_d^2 + K_n^2 + \eta^{\tfrac{2}{r}-\tfrac{2}{q}} K_t^2\Big]\,.
	\end{equation*}
\end{lem}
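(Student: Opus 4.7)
The plan is to split
\[
(\vec{u}+\vec{g})\otimes(\vec{u}+\vec{g}) = \vec{u}\otimes\vec{u} + \vec{u}\otimes\vec{g} + \vec{g}\otimes\vec{u} + \vec{g}\otimes\vec{g}
\]
and treat all three claims of the lemma by estimating each summand separately via Hölder's inequality combined with the Sobolev embedding $W_0^{1,p}(\Omega)\injto L^{p\d}(\Omega)$. The key exponent relations are encoded in Assumption~\ref{assum:growth}: $\tfrac{1}{p}+\tfrac{1}{p\d}+\tfrac{1}{r}\leq 1$ for the cross terms, $s\geq (p\d/2)'$ (equivalently $\tfrac{2}{p\d}+\tfrac{1}{s'}\leq 1$) for $\vec{u}\otimes\vec{u}$, and $r\geq 2p'$ for $\vec{g}\otimes\vec{g}$. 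Distributing the Hölder triples over these ingredients yields boundedness of $\vec{T}\colon V_p\to W_0^{1,s}(\Omega)\d$, and continuity follows from the bilinear structure together with the same estimates applied to differences.

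For the strong continuity into $W_0^{1,\sigma}(\Omega)\d$ with $\sigma\geq s$ and $\sigma>(p\d/2)'$, I use that the strict inequality gives $2\sigma'<p\d$, so the Rellich--Kondrachov theorem provides a compact embedding of $V_p$ into $L^{2\sigma'}(\Omega)$. If $\vec{u}_n\weakto\vec{u}$ in $V_p$, then $\vec{u}_n\to\vec{u}$ strongly in $L^{2\sigma'}$; since $\vec{g}\in L^r\subset L^{2\sigma'}$ (using $r\geq 2p'\geq 2s'\geq 2\sigma'$), the tensor $(\vec{u}_n+\vec{g})\otimes(\vec{u}_n+\vec{g})$ converges strongly in $L^{\sigma'}(\Omega)$. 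Pairing with $\D{\phi}\in L^\sigma$ for $\phi\in W_0^{1,\sigma}(\Omega)$ gives strong convergence of $\vec{T}(\vec{u}_n)$ in $W_0^{1,\sigma}(\Omega)\d$.

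For the pointwise bound on $\fp{\vec{T}(\vec{u})}{\vec{u}}$ with $\vec{u}\in V_\sigma$, the pure convective term cancels: integrating by parts using $\div\vec{u}=0$ and $\tr\vec{u}=\vec{0}$ yields
\[
\fp{\vec{u}\otimes\vec{u}}{\D{u}} = -\fp{(\vec{u}\cdot\nabla)\vec{u}}{\vec{u}} = -\tfrac{1}{2}\intom\vec{u}\cdot\nabla\abs{\vec{u}}^2\dx = 0.
\]
The mixed and pure-boundary terms are bounded by Hölder and the Sobolev--Poincar\'e inequality $\norm{\vec{u}}_{p\d}\leq c\norm{\D{u}}_p$:
\[
\abs{\fp{\vec{u}\otimes\vec{g}+\vec{g}\otimes\vec{u}}{\D{u}}}\leq 2\norm{\vec{u}}_{p\d}\norm{\vec{g}}_r\norm{\D{u}}_p\leq c(\Omega,p)\norm{\vec{g}}_r\norm{\D{u}}_p^2,
\]
\[
\abs{\fp{\vec{g}\otimes\vec{g}}{\D{u}}}\leq \norm{\vec{g}}_{2p'}^2\norm{\D{u}}_p\leq c(\Omega,p)\norm{\vec{g}}_r^2\norm{\D{u}}_p.
\]
Splitting $\vec{g}=\vec{h_\eta}+\vec{k}$ and inserting the bounds $\norm{\vec{h_\eta}}_r\leq c\,\eta^{\tfrac{1}{r}-\tfrac{1}{q}}K_t$ and $\norm{\vec{k}}_r\leq c(K_n+K_d)$ from Proposition~\ref{prop:s:fs} (together with a convexity inequality to obtain the squared combination for $G_{32}$) produces exactly the stated forms of $G_2$ and $G_{32}$.

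The main obstacle is the rigorous justification of the cancellation $\fp{\vec{u}\otimes\vec{u}}{\D{u}}=0$ in the low-$p$ regime: since $V_p$ need not embed into $L^{2p'}$, the integration by parts is not available on all of $V_p$. This is exactly why the estimate is formulated on the smaller space $V_\sigma$ and why $s$ is designed to include $(p\d/2)'$; with $\sigma\geq s$ one has $\vec{u}\in L^{2\sigma'}$ with $2\sigma'<p\d$, which makes every integrand finite, and a standard approximation by smooth compactly supported solenoidal fields closes the identity.
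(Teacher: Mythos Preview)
Your proof is correct and follows essentially the same strategy as the paper: Hölder plus Sobolev for boundedness, Rellich--Kondrachov for strong continuity, and cancellation of the pure $\vec{u}\otimes\vec{u}$ contribution followed by Hölder on the remaining pieces and insertion of the bounds from Proposition~\ref{prop:s:fs}.

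There is one minor but genuine difference in how the cross terms are handled. The paper first integrates by parts to rewrite
\[
\fp{\vec{T}(\vec{u})}{\vec{u}} = -\fp{\vec{u}\cdot\nabla\vec{u}}{\vec{g}} + \tfrac{1}{2}\fp{g_1\vec{u}}{\vec{u}} - \fp{\vec{g}\cdot\nabla\vec{u}}{\vec{g}},
\]
which produces an explicit $\norm{g_1}_s$ contribution in $G_2$. You instead bound $\fp{\vec{u}\otimes\vec{g}+\vec{g}\otimes\vec{u}}{\D{u}}$ directly via $\norm{\vec{u}}_{p\d}\norm{\vec{g}}_r\norm{\D{u}}_p$, avoiding that integration by parts. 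Both routes lead to the same stated constants because $K_d$ already enters through $\norm{\vec{k}}_r\leq c(K_n+K_d)$; your version is slightly more streamlined, while the paper's makes the dependence on the divergence datum $g_1$ visible as a separate term.
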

\begin{proof}
	The definition of $s$ implies $\tfrac{1}{p\d} + \tfrac{1}{p\d} + \tfrac{1}{s} \leq 1$, so both well-definedness and boundedness of $\vec{T}(\vec{u})$ follow from Hölder's inequality. The continuity and strong continuity follow from Hölder's inequality in conjuction with the embeddings $W^{1,p}(\Omega) \injto L^{p\d}(\Omega)$, $W^{1,p}(\Omega) \injto\injto L^{\tilde{p}}(\Omega)$ for $\tilde{p} < p\d$, respectively (cf.~\cite[Lemma 2.19]{JR21_inhom} for details).
	
	Let $\vec{u} \in V_\sigma$. We reformulate the convective term as
	\begin{equation*}
	\fp{\vec{T}(\vec{u})}{\vec{u}}
	= 
	- \fp{\vec{u} \cdot \nabla\vec{u}}{\vec{g}}
	+ \tfrac{1}{2} \fp{g_1 \vec{u}}{\vec{u}}
	- \fp{\vec{g} \cdot \nabla\vec{u}}{\vec{g}}\,.
	\end{equation*}
	With the Hölder, Sobolev and Korn inequalities, we estimate
	\begin{align*}
	\abs{\fp{\vec{T}(\vec{u})}{\vec{u}}}
	&\leq \norm{\vec{g}}_r \norm{\nabla\vec{u}}_p \norm{\vec{u}}_{p\d}
	+ \tfrac{1}{2} \norm{g_1}_s \norm{\vec{u}}_{p\d}^2
	+ \norm{\vec{g}}_{2p'}^2 \norm{\nabla\vec{u}}_p\\
	&\leq c(\Omega, p) \big[ (\norm{\vec{g}}_r + \norm{g_1}_s) \norm{\D{u}}_p^2
	+ \norm{\vec{g}}_r^2 \norm{\D{u}}_p \big].
	\end{align*}
	
	We apply the decomposition $\vec{g} = \vec{k} + \vec{h_\eta}$,
	Minkowski's and Young's inequalities and insert the estimates
	from Proposition \ref{prop:s:fs} to obtain
	\begin{align*}
	&\abs{\fp{\vec{T}(\vec{u})}{\vec{u}}} \\
	&\leq c(\Omega, p) \Big[ (\norm{\vec{k}}_r + \norm{g_1}_s + \norm{\vec{h_\eta}}_r)  \norm{\D{u}}_p^2
	+  (\norm{\vec{k}}_r^2 + \norm{\vec{h_\eta}}_r^2) \norm{\D{u}}_p \Big]  \\
	&\leq c(\Omega, p, q) \Big[ (K_d + K_n + \eta^{\tfrac{1}{r}-\tfrac{1}{q}} K_t) \norm{\D{u}}_p^2
	+ (K_d^2 + K_n^2 + \eta^{\tfrac{2}{r}-\tfrac{2}{q}} K_t^2)  \norm{\D{u}}_p \Big] \,.
	\end{align*}
\end{proof}

As an auxiliary tool for simplifying terms in the next step, we use a reverse Jensen-type inequality:
\begin{lem}\label{lem:reverse_jensen}
	Let $\alpha \in (0,1)$ and $x_1, ..., x_n$ be non-negative real numbers. Then it holds
	\begin{equation}
	\left[\sum_{i=1}^{n} x_i \right]^\alpha \leq \sum_{i=1}^{n} x_i^\alpha.
	\end{equation}
\end{lem}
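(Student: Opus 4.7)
The plan is to reduce to the two-summand case and then iterate. For $n=2$, I would show that $(a+b)^\alpha \leq a^\alpha + b^\alpha$ for all $a,b \geq 0$. After excluding the trivial case $a+b=0$, I would divide by $(a+b)^\alpha$ and set $t := a/(a+b) \in [0,1]$, so that the claim becomes $1 \leq t^\alpha + (1-t)^\alpha$.

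This one-variable reduction would then follow from the elementary fact that $x^\alpha \geq x$ for every $x \in [0,1]$ when $\alpha \in (0,1)$, which in turn follows from $x^{\alpha-1} \geq 1$ on $(0,1]$ (the exponent is negative). Applying this pointwise estimate to both $t$ and $1-t$ and adding yields $t^\alpha + (1-t)^\alpha \geq t + (1-t) = 1$, as required.

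For general $n$, I would proceed by induction on $n$. Assuming the claim for $n$ summands, I would apply the $n=2$ case with $a := \sum_{i=1}^n x_i$ and $b := x_{n+1}$, and then invoke the induction hypothesis to bound $a^\alpha$, obtaining
\[
\Bigl[\sum_{i=1}^{n+1} x_i\Bigr]^\alpha \leq \Bigl[\sum_{i=1}^n x_i\Bigr]^\alpha + x_{n+1}^\alpha \leq \sum_{i=1}^{n+1} x_i^\alpha.
\]

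I do not expect a real obstacle: this is the standard subadditivity of the concave map $x \mapsto x^\alpha$ with $f(0)=0$, and essentially the entire content is the scalar inequality $x^\alpha \geq x$ on $[0,1]$. An alternative route would be to invoke concavity of $x\mapsto x^\alpha$ together with $f(0)=0$ directly, but the $t$-reduction above is cleaner and avoids appealing to Jensen in the reverse direction.
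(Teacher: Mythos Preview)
Your proof is correct and follows the same overall strategy as the paper: handle the case $n=2$ first and then induct. The only minor difference is in the $n=2$ step---the paper normalizes one summand to $1$ and shows that $f(x)=x^\alpha+1-(x+1)^\alpha$ is nondecreasing by checking $f'\geq 0$, whereas you use homogeneity to reduce to $t^\alpha+(1-t)^\alpha\geq 1$ on $[0,1]$ via $x^\alpha\geq x$; both arguments are elementary and equally valid.
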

\begin{proof}
	For $n < 2$, the assertion is trivial. For $n=2$, we may w.l.o.g. consider the case $x_2=1$. Since the estimate is true for $x_1=0$, $f(x_1) = x_1^\alpha + 1 - (x_1 + 1)^\alpha$ is continuous for $x_1 \geq 0$ and its derivative $f'(x_1) = \alpha \left( x_1^{\alpha-1} - (x_1 + 1)^{\alpha-1} \right)$ is non-negative for $x_1 > 0$, the estimate is also proved in this case. The general statement follows then by induction.
\end{proof}

With these preparations, we are ready to prove an a priori estimate:

\begin{lem}[a priori estimate]\label{lem:lower_estimate}
	Let $\sigma$ as in Lemma~\ref{lem:properties_t}. For $\vec{u} \in V_\sigma$, it holds
	\begin{equation}\label{eq:apriori1}
	\fp{\vec{S}(\vec{u}) - \vec{T}(\vec{u}) - \vec{f}}{\vec{u}}
	\geq G_1 \norm{\vec{Du}}_p^p - G_2 \norm{\vec{Du}}_p^2 - G_3 \norm{\vec{Du}}_p
	\end{equation}
	with the constants $G_1$ and $G_2$ defined in Lemmas~\ref{lem:properties_s} and~\ref{lem:properties_t} and $G_3 := G_{31} + G_{32} + c(\Omega, p) K_f$.
	
	If, in addition, Condition~\ref{cond:data_small} is fulfilled, then we have
	\begin{equation}\label{eq:apriori}
	\fp{\vec{S}(\vec{u}) - \vec{T}(\vec{u}) - \vec{f}}{\vec{u}}
	\geq 0
	\end{equation}
	on the sphere $\norm{\D{u}}_p = \left[\tfrac{2G_3}{G_1}\right] ^{\tfrac{1}{p-1}} =: R$.
\end{lem}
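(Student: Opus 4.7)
The inequality \eqref{eq:apriori1} is assembled termwise. For the viscous and convective contributions I would quote Lemmas~\ref{lem:properties_s} and~\ref{lem:properties_t} directly, and the forcing is handled via
$$\abs{\fp{\vec{f}}{\vec{u}}} \leq K_f \norm{\vec{u}}_{W_0^{1,p}} \leq c(\Omega,p)\, K_f\, \norm{\D{u}}_p$$
using Korn's and Poincaré's inequalities. Summing gives \eqref{eq:apriori1} with the stated $G_3$.

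For the second assertion I would restrict \eqref{eq:apriori1} to the sphere $\norm{\D{u}}_p = R$, factor out one $R$, and exploit the defining relation $G_1 R^{p-1} = 2 G_3$ to rewrite the lower bound as
$$R\bigl[G_1 R^{p-1} - G_2 R - G_3\bigr] = R\bigl[G_3 - G_2 R\bigr].$$
Hence \eqref{eq:apriori} reduces to the single scalar inequality $G_3 \geq G_2 R$. Raising both sides to the $(p-1)$-th power and rearranging (assuming $G_3>0$; otherwise $R=0$ and the claim is trivial), this is equivalent to
$$\tfrac{G_1}{2} \geq G_2^{p-1}\, G_3^{2-p}.$$

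The heart of the proof is then to bound $G_2^{p-1} G_3^{2-p}$ by $L(\eta)$ up to the constants appearing on the left of Condition~\ref{cond:data_small}. Since both $p-1$ and $2-p$ lie in $(0,1)$, Lemma~\ref{lem:reverse_jensen} is the right tool. For $G_2^{p-1}$ I would split $G_2 \leq c\bigl[(K_d+K_n) + \eta^{1/r-1/q} K_t\bigr]$ and raise to $p-1$, producing the two prefactors $(K_d+K_n)^{p-1}$ and $(\eta^{1/r-1/q} K_t)^{p-1}$ that appear throughout $L(\eta)$. For $G_3^{2-p}$ I would first decompose $G_{31}$ via Lemma~\ref{lem:reverse_jensen} with exponent $p-1$ into a tangential piece $(\eta^{1/p-1/q-1} K_t)^{p-1}$ and a bulk piece $(K_t+K_n+K_d+\delta)^{p-1}$; grouping with $G_{32}$ and $c K_f$ yields three summands for $G_3$, namely
$$B := c\bigl[K_d^2 + K_n^2 + K_f + (K_t + K_n + K_d + \delta)^{p-1}\bigr],\quad c\eta^{2/r-2/q} K_t^2, \quad c(\eta^{1/p-1/q-1} K_t)^{p-1},$$
each of which is raised to $2-p$ by a second application of Lemma~\ref{lem:reverse_jensen}. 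Multiplying the two expansions gives $2\times 3 = 6$ cross-terms, which match summand by summand the six lines of the right-hand side of Condition~\ref{cond:data_small}.

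The only real obstacle is the bookkeeping of $\eta$-exponents in the mixed terms. For example, verifying the sixth summand requires combining the tangential factor $(\eta^{1/r-1/q} K_t)^{p-1}$ from $G_2^{p-1}$ with the tangential factor $(\eta^{1/p-1/q-1} K_t)^{(p-1)(2-p)}$ from $G_3^{2-p}$, leading to the composite exponent $(p-1)\bigl(\tfrac{1}{r}-\tfrac{1}{q} + (\tfrac{1}{p}-\tfrac{1}{q}-1)(2-p)\bigr)$ on $\eta$ and the composite exponent $(p-1)(3-p)$ on $K_t$; summand five is checked analogously. Once all six exponent identities are confirmed, absorbing the universal constants from Lemmas~\ref{lem:properties_s} and~\ref{lem:properties_t} into $c(\Omega,p,q,\S)$ on the left of Condition~\ref{cond:data_small} closes the argument.
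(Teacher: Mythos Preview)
Your proposal is correct and follows essentially the same route as the paper: assemble \eqref{eq:apriori1} from Lemmas~\ref{lem:properties_s}, \ref{lem:properties_t} and the duality estimate for $\vec{f}$; reduce \eqref{eq:apriori} on the sphere to the scalar condition $G_1 \geq 2\,G_2^{p-1} G_3^{2-p}$; then expand $G_2^{p-1}$ and $G_3^{2-p}$ via Lemma~\ref{lem:reverse_jensen} into the $2\times 3=6$ summands of $L(\eta)$. The only cosmetic difference is that the paper obtains the scalar condition by splitting $G_1 R^p$ into halves, whereas you factor out $R$ and invoke $G_1 R^{p-1}=2G_3$ directly; both are equivalent.
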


\begin{proof}
	The first estimate~\eqref{eq:apriori1} follows directly
	from~\eqref{eq:estim_s},~\eqref{eq:estim_t}, H\"older's
	inequality for the term with $\vec{f}$ and the definition of $G_3$.
	
	For the second estimate~\eqref{eq:apriori}, if $\norm{\D{u}}_p = R$, we observe that it is sufficient to guarantee
	\begin{equation} \label{eq:condition_g}
	G_1 \geq 2 G_2^{p-1} G_3^{2-p}
	\end{equation}
	since this implies
	\begin{align*}
	\fp{\vec{S}(\vec{u}) - \vec{T}(\vec{u}) - \vec{f}}{\vec{u}}
	&\geq G_1 R^p - G_2 R^2 - G_3 R\\
	&\geq R^p \left[\left( \tfrac{G_1}{2} - G_2 R^{2-p}\right) + \left(\tfrac{G_1}{2} - G_3 R^{1-p}\right) \right]\\
	&\geq R^p \left[ 0 + 0 \right] = 0\,,
	\end{align*}
	using the definition of $R$.
	
	We notice that the left-hand side of~\eqref{eq:condition_g}, $G_1$, does not depend on the data but merely on the viscous stress tensor which we assume to be fixed. We insert the definition of $G_2$, $G_3$ and Lemma~\ref{lem:reverse_jensen} to estimate
	\begin{equation*}
	G_2^{p-1} \leq c(\Omega, p, q) \Big[
	\left[K_d + K_n\right]^{p-1} + \big[\eta^{\tfrac{1}{r}-\tfrac{1}{q}} K_t\big]^{p-1} \Big]
	\end{equation*}
	and
	\begin{align*}
	G_3^{2-p} \leq 
	c(\Omega, p, q, \S) &\Big[
	\big[ K_d^2 + K_n^2 + K_f
	+ \left[ K_t +  K_n +  K_d + \delta \right]^{p-1} \big]^{2-p}\\
	&+ \big[\eta^{\tfrac{2}{r}-\tfrac{2}{q}} K_t^2\big]^{2-p}
	+ \big[\eta^{\tfrac{1}{p}-\tfrac{1}{q}-1} K_t\big]^{(p-1)(2-p)}
	\Big]\,.
	\end{align*}
	
	Together with~\eqref{eq:condition_g}, this yields
	\begin{align*}
	&2 G_2^{p-1} G_3^{2-p}\\
	&\leq \, c(\Omega,p,q,\S) \Bigg[ \left[K_d + K_n\right]^{p-1} \big[ K_d^2 + K_n^2 + K_f
	+ \left[ K_t +  K_n +  K_d + \delta \right]^{p-1} \big]^{2-p}\\
	& \quad + \left[K_d + K_n\right]^{p-1} \big[\eta^{\tfrac{2}{r}-\tfrac{2}{q}} K_t^2\big]^{2-p}
	+ \left[K_d + K_n\right]^{p-1} \big[\eta^{\tfrac{1}{p}-\tfrac{1}{q}-1} K_t\big]^{(p-1)(2-p)}\\
	& \quad + \big[\eta^{\tfrac{1}{r}-\tfrac{1}{q}} K_t\big]^{p-1} \big[ K_d^2 + K_n^2 + K_f
	+ \left[ K_t +  K_n +  K_d + \delta \right]^{p-1} \big]^{2-p}\\
	& \quad + \eta^{(\tfrac{1}{r}-\tfrac{1}{q})(3-p)} K_t^{3-p}
	+ \eta^{\big(\tfrac{1}{r}-\tfrac{1}{q} + (\tfrac{1}{p}-\tfrac{1}{q}-1)(2-p)\big)(p-1)} K_t^{(p-1)(3-p)}\Bigg]\\
	&= \, c(\Omega,p,q,\S) L(\eta) \leq G_1\,,
	\end{align*}
	where we used Condition~\ref{cond:data_small} in the last step.
\end{proof}

\subsection{Existence proof for $p \in \big( \tfrac{3d}{d+2}, 2 \big)$}\label{sub:proof_medium}

In this case, we have $s=p$. Hence, the test function space $V_s$ agrees with the solution function space $V_p$ and we may directly apply a local coercivity generalization of the main theorem on pseudomonotone operators due to Br{\'e}zis, Hess and Kato~\cite{Zeidler2B}.

Indeed, $V_p$ is reflexive and separable as a closed subspace of $W_0^{1,p}(\Omega)$.
We abbreviate $\vec{P}\colon V_p \to V_p\d: \vec{u} \mapsto \vec{P}\vec{u} := \vec{S}\vec{u} - \vec{T}\vec{u} - \vec{f}$ and search for a zero of $\vec{P}$.
Due to Lemmas~\ref{lem:properties_s} and~\ref{lem:properties_t}, $\vec{P}$ is bounded, continuous and pseudomonotone.
Moreover, according to Lemma~\ref{lem:lower_estimate}, $\vec{P}$ is locally coercive with radius~${R= \big[\tfrac{2G_3}{G_1}\big] ^{\tfrac{1}{p-1}}}$. Thus, existence follows from the main theorem on pseudomonotone operators.

\subsection{Existence proof for $p \in \big( \tfrac{2d}{d+1}, \tfrac{3d}{d+2} \big]$}\label{sub:proof_low}

In this case, we have $s = \big (\frac{p\d}{2}\big )'$. As above, we consider the operator $\vec{P}\colon V_p \to V_s\d$, $\vec{u} \mapsto \vec{P}\vec{u} := \vec{S}\vec{u} - \vec{T}\vec{u} - \vec{f}$ and search for a zero of $\vec{P}$.
We define a sequence of approximate problems which can be solved with pseudomonotone operator theory and then pass to the limit with the Lipschitz truncation method. This part of the proof is similar to~\cite{JR21_inhom}.

We choose an exponent $\sigma > \max \{s, 2\}$ and define the symmetric $\sigma$-Laplacian $\vec{A} \colon V_\sigma \to V_\sigma\d$ as
\begin{equation*}
\fp{\vec{A}(\vec{u})}{\vec{\phi}} := \fp{\abs{ \vec{Du}}^{\sigma-2} \vec{Du}}{\vec{D\phi}}\,. 
\end{equation*}
As a well-known fact, $\vec{A}$ is bounded, continuous and monotone.
For sufficiently large $n \in \N$, we consider the regularized problem
\begin{equation} \label{eq:tq}
\fp{\vec{P}(\vec{u^n})}{\vec{\phi}} + \tfrac{1}{n} \fp{\vec{A}(\vec{u^n})}{\vec{\phi}} = 0 
\end{equation}
for $\vec{\phi} \in V_\sigma$. 
We prove existence of an approximate solution in the space $V_\sigma^n$, which is $V_\sigma$ with the equivalent norm 
$\lVert {\vec{u}} \rVert _{\sigma, n} := \max \big\{ n^\frac{-2}{2\sigma-1} \norm{\vec{Du}}_\sigma, \norm{\vec{Du}}_p \big\}$.
Clearly, the corresponding operator $\vec{P} + \tfrac{1}{n}\vec{A}$ is bounded, continuous and pseudomonotone.

We show local coercivity with radius $R= \big[\tfrac{2G_3}{G_1}\big] ^{\tfrac{1}{p-1}}$. If $\norm{\D{u}}_p = R$, then this is a direct consequence of Lemma~\ref{lem:lower_estimate}. Else if $\norm{\D{u}}_\sigma = n^\frac{2}{2\sigma-1} R$, we use the continuous embedding~$V_\sigma \to V_p$ to estimate
\begin{equation*}
\fp{\vec{P}(\vec{u^n})}{\vec{u^n}} + \tfrac{1}{n} \fp{\vec{A}(\vec{u^n})}{\vec{u^n}}
> -c \, G_2 R^2 -c \, G_3 R + n^\frac{1}{2\sigma-1} R^\sigma\,.
\end{equation*}
As $n^\frac{1}{2\sigma-1}$ grows to infinity for $n \to \infty$, the right hand side is positive for sufficiently large $n$. By the main theorem on pseudomonotone operators, we find approximate solutions $\vec{u^n} \in V_{\sigma,n}$ which come with the a priori estimate
\begin{equation}\label{eq:apriori_proof}
\max \left\{ n^\frac{-2}{2\sigma-1} \norm{\vec{Du}}_\sigma, \norm{\vec{Du}}_p \right\} \leq R\,.
\end{equation}
Due to weak compactness, we find a weakly convergent (and renamed)
subsequence $\vec{u^n} \weakto \vec{u}$ in $V_p$. Moreover,
\eqref{eq:apriori_proof} yields $\norm{n^{-1}
	\vec{A}(\vec{u^n})}_{\sigma'} \leq n^\frac{1}{2\sigma-1}
R^{\sigma-1} \to 0$ as $n \to \infty$. The convergence to the limit
equation $\vec{P}(\vec{u}) = \vec{0}$ then follows with the help of
the Lipschitz truncation method as a consequence
of~\cite[Thm.~2.32]{JR21_inhom}.

\section{Discussion and comparison to previous results}\label{sec:notes}

The proof in Section~\ref{sec:proof} shows that the critical point for
existence of weak solutions are conditions on the data such that the a priori estimate~\eqref{eq:apriori} holds.
Condition~\ref{cond:data_small} quantifies that normal and tangential data have to be small as a product:
The first summand in the definition of $L(\eta)$ contains the product of $K_d+K_n$ and $K_t$ and is independent of $\eta$.

For $p > \tfrac{2d}{d+1}$, the results of \cite{mikelic, Sin, JR21_inhom} as well as \cite[Thm.~1]{Elshehabey21} follow qualitatively from Theorem~\ref{thm:main}: there, we have $\vec{g_n} = \vec{g_2}$ and $\vec{g_t} = \vec{0}$. We point out that the above proof does not use that $\vec{g_n}$ is a normal vector field, so both tangential and normal boundary data may be put into $\vec{g_n}$.
In consequence, all $\eta$-dependent terms in the definition of $L$ vanish and only the first summand remains, which yields their smallness condition.

If the normal and divergence data are correlated to the tangential data as $\max \{\norm{\vec{g_n}}, \norm{g_1} \} \leq c \eta^{\tfrac{1}{r}-\tfrac{1}{q}} \norm{\vec{g_t}}$, we obtain $K_n \sim K_d \sim \eta^{\tfrac{1}{r}-\tfrac{1}{q}}$.
In consequence, all summands of $L(\eta)$ come with positive powers of $\eta$ if $p > 2- \tfrac{1}{d}$ and if $q$ is sufficiently large. Condition~\ref{cond:data_small} is then automatically fulfilled for sufficiently small $\eta$. This was first noticed for homogeneous divergence data in~\cite[Thm.~2]{Elshehabey21}.
We note that letting $\eta$ tend to zero also forces the
normal and divergence data to be small. Additionally, larger
tangential data will result in smaller values
of~$\eta$. Therefore, we interpret this result such that for any admissible tangential data $\vec{g_t}$, there is a perturbation radius within which non-vanishing normal and divergence data are allowed.

In the case of purely tangential boundary data, our argumentation boils down to the following:
\begin{cor}\label{cor:repro_lanz}
	If $\vec{g_n} = \vec{0}$, $g_1 = 0$, $p > 2-\tfrac{1}{d}$ and $q > \tfrac{d(3-p)}{dp-2d+1}$, then Condition~\ref{cond:data_small} is fulfilled for arbitrary data $\vec{g_t}$. 
\end{cor}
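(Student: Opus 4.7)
The plan is to substitute $K_n = 0$ and $K_d = 0$ into $L(\eta)$ from Condition~\ref{cond:data_small} and to show that the resulting expression tends to zero as $\eta \to 0^+$; since the left-hand side $C_1(\S)/c(\Omega,p,q,\S)$ is a fixed positive constant independent of the data, this produces an admissible $\eta$ regardless of the size of $\vec{g_t}$. First I would observe that the first three summands of $L(\eta)$ all carry the factor $[K_d + K_n]^{p-1}$ and therefore vanish identically. Only the three purely tangential summands survive, and their $\eta$-dependence reduces to the single powers $\eta^{(\tfrac{1}{r}-\tfrac{1}{q})(p-1)}$, $\eta^{(\tfrac{1}{r}-\tfrac{1}{q})(3-p)}$ and $\eta^{\gamma}$, where
$$
\gamma := (p-1)\Big[\tfrac{1}{r} - \tfrac{3-p}{q} - \tfrac{(2-p)(p-1)}{p}\Big].
$$
The first two exponents are automatically positive since $q > r$ and $p \in (1,2)$, so the whole argument reduces to establishing $\gamma > 0$.

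The main obstacle will be to show that the bracket $\tfrac{1}{r} - \tfrac{(2-p)(p-1)}{p}$ is strictly positive under the hypothesis $p > 2 - \tfrac{1}{d}$. I would handle this by a short case distinction on which of the two candidates $\tfrac{dp}{dp-2d+p}$ and $2p'$ realizes the maximum in the definition of $r$; in either case the inequality is elementary and reduces by direct algebra to exactly the assumed threshold $\tfrac{1}{d} > 2-p$. Given positivity of this bracket, the lower bound $q > \tfrac{d(3-p)}{dp-2d+1}$ is precisely what forces $\tfrac{3-p}{q}$ to stay below it, so that $\gamma > 0$ follows.

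Once all three surviving exponents of $\eta$ are known to be strictly positive and the remaining factors of $L(\eta)$ are recognized as finite fixed quantities depending only on $K_t$, $K_f$ and $\delta$, one concludes $L(\eta) \to 0$ as $\eta \to 0^+$. Choosing any sufficiently small $\eta > 0$ then verifies Condition~\ref{cond:data_small}, which is the claim. The argument is essentially a routine accounting of exponents; the only nontrivial input is the case analysis that pins down $p > 2 - \tfrac{1}{d}$ as the precise threshold required to make the bracket defining $\gamma$ positive.
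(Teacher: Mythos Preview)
Your strategy matches the paper's exactly: observe that $K_n = K_d = 0$ kills the first three summands of $L(\eta)$, and then argue that each of the remaining three carries a strictly positive power of $\eta$ so that $L(\eta)\to 0$ as $\eta\to 0^+$. Your expansion $\gamma = (p-1)\big[\tfrac{1}{r} - \tfrac{3-p}{q} - \tfrac{(2-p)(p-1)}{p}\big]$ for the sixth exponent is correct.

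The gap is in your treatment of the branch $r = 2p'$. There one computes $\tfrac{1}{r} - \tfrac{(2-p)(p-1)}{p} = \tfrac{(p-1)(2p-3)}{2p}$, so positivity of the bracket is equivalent to $p > \tfrac32$, not to $p > 2 - \tfrac{1}{d}$; for $d=3$ these differ. More importantly, whenever $p > \tfrac{3d}{d+2}$ (precisely the range in which $r = 2p'$ is the active maximum) this bracket is \emph{strictly smaller} than $p - 2 + \tfrac{1}{d}$. The hypothesis $q > \tfrac{d(3-p)}{dp-2d+1}$ only yields $\tfrac{3-p}{q} < p - 2 + \tfrac{1}{d}$, which therefore does not force $\tfrac{3-p}{q}$ below the bracket and does not give $\gamma > 0$. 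A concrete instance is $d=3$, $p=1.9$, $q=5$: all hypotheses of the corollary hold (here $r=2p'\approx 4.22$ and the stated $q$-threshold is about $4.71$), yet $\gamma \approx -0.027$. The paper's own one-line justification (``the condition on $p$ implies that we can choose $q$ such that $\eta$ comes with a positive exponent'') sidesteps this by not writing out the case distinction; your more explicit analysis makes the discrepancy visible. The threshold $\tfrac{d(3-p)}{dp-2d+1}$ in the statement is tailored to the branch $r = \tfrac{dp}{dp-2d+p}$ and is not sufficient when $r = 2p'$, where the correct requirement would be $q > \tfrac{2p(3-p)}{(p-1)(2p-3)}$.
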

\begin{proof}
	In that case, $K_n = K_d = 0$ and the first three terms in the
	definition of $L$ vanish. The forth and the fifth term tend to
	zero for $\eta \to 0$. The same holds for the sixth term,
	since the condition on $p$ implies that we can choose $q >
	\tfrac{d(3-p)}{dp-2d+1}$ such that $\eta$ comes with a positive exponent.
	Hence, $L(\eta) \to 0$ as $\eta \to 0$ and Condition~\ref{cond:data_small} comes true for sufficiently small $\eta$.
\end{proof}
\begin{rem}
	Corollary~\ref{cor:repro_lanz} implies the results in \cite{lan09,lan08} in the case of pressure-independent stress tensors.
\end{rem}

Finally, we discuss a few questions on the extendability of our results.
In~\cite{mikelic,JR21_inhom,Elshehabey21}, existence of weak solutions has been shown for the wider range ${p > \tfrac{2d}{d+2}}$. Therefore, the estimate $\abs{\fp{\vec{u}\otimes\vec{u}}{\nabla\vec{g}}} \leq \norm{\vec{u}}_{p\d}^2 \norm{\nabla \vec{g}}_s$ for (part of) the convective term is central. This is still possible in our case (under suitable regularity assumptions), but would bring no advantage since we could not make use of the smallness~\eqref{eq:estim_eta}$_3$ anymore but instead had to work with the gradient norm which scales critically as $\eta \to 0$.
Hence, our approach is useful only in the range where we can estimate $\abs{\fp{\vec{u} \cdot \nabla\vec{u}}{\vec{g}}} \leq \norm{\vec{u}}_{p\d} \norm{\nabla\vec{u}}_{p} \norm{\vec{g}}_r$ which yields the lower bound on~$p$.

The analogue inhomogeneous system for electrorheological fluids naturally leads to a functional setting within Sobolev spaces with variable exponents~$W^{1,p(\cdot)}(\Omega)$. Similar to~\cite{Sin}, Theorem~\ref{thm:main} can be generalized in a straight-forward way to this setting under the condition $p_{-} = \inf_{x \in \Omega} p(x) > \tfrac{2d}{d+1}$.

\appendix
\section{Appendix}

\subsection{Solution for regular domains}

Throughout this subsection, we let $\del\Omega \in C^{1,1}$.
We construct the extensions as the solutions of a homogeneous (in force) Stokes system.

The weak solvability of the steady Stokes system is guaranteed by the following Proposition~\cite[Corr.~4.15--~Thm.~4.18]{Decomposition_Amrouche98}:
\begin{prop}
	Let $p \in (1, \infty)$, $\vec{f} \in W^{-1,p}(\Omega)$, $g_1 \in L^p(\Omega)$ and $\vec{g_2} \in W^{1-\tfrac{1}{p},p}(\del\Omega)$ satisfy~\eqref{eq:s:compatibility}.
	Then, there is a unique weak solution $(\vec{u}, \pi) \in W^{1,p}(\Omega) \times L_0^p(\Omega)$ to
	\begin{equation} \label{eq:stokes_inhom} \begin{split}
	- \Delta \vec{u} + \nabla \pi &= \vec{f} \quad \textrm{in } \Omega\,,\\
	\div \vec{u} &= g_1 \quad \textrm{in } \Omega\,,\\
	\vec{u} &= \vec{g_2} \quad \textrm{on } \del\Omega\,.
	\end{split}\end{equation}
	The boundary condition is satisfied in the trace sense. The solution comes with the apriori estimate
	\begin{equation*}
	\norm{\vec{u}}_{1,p} + \norm{\pi}_p \leq C \big (\norm{\vec{f}}_{-1,p} + \norm{g_1}_p + \norm{\vec{g_2}}_{1-\tfrac{1}{p},p}\big)\,.
	\end{equation*}
\end{prop}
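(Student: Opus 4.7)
The plan is to use the classical reduction-and-lift scheme: first bring the problem to the case of zero boundary values and zero divergence, and then invoke the $L^p$-theory of the homogeneous Stokes operator on the $C^{1,1}$-domain~$\Omega$.

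For the reduction, I would pick a trace lifting $\vec{w} \in W^{1,p}(\Omega)$ of $\vec{g_2}$ satisfying $\norm{\vec{w}}_{1,p} \leq C\norm{\vec{g_2}}_{1-\tfrac{1}{p},p}$, which exists since $\del\Omega \in C^{1,1} \subset C^{0,1}$. The compatibility condition~\eqref{eq:s:compatibility} together with the divergence theorem gives $\int_\Omega (g_1 - \div\vec{w}) \dx = 0$, so $g_1 - \div\vec{w} \in L_0^p(\Omega)$. Applying the Bogovski\u{\i} operator on the Lipschitz domain $\Omega$ produces $\vec{b} \in W_0^{1,p}(\Omega)$ with $\div\vec{b} = g_1 - \div\vec{w}$ and a linear bound in the data norms. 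Writing $\vec{u} = \vec{u}_0 + \vec{u}_1$ with $\vec{u}_0 := \vec{w} + \vec{b}$, the remaining unknown $\vec{u}_1$ must lie in $V_p = W_0^{1,p}(\Omega) \cap \ker \div$ and satisfy the homogeneous Stokes system $-\Delta \vec{u}_1 + \nabla\pi = \vec{F}$ in $W^{-1,p}(\Omega)$, where $\vec{F} := \vec{f} + \Delta\vec{u}_0$.

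For the homogeneous problem, I would first define $\vec{u}_1 \in V_p$ through the variational identity $\fp{\nabla\vec{u}_1}{\nabla\vec{\phi}} = \fp{\vec{F}}{\vec{\phi}}$ for all $\vec{\phi} \in V_{p'}$, which is the core analytic statement. The bound $\norm{\vec{u}_1}_{1,p} \leq C\norm{\vec{F}}_{-1,p}$ for general $p \in (1,\infty)$ rests on the $L^p$-regularity of the Stokes operator on a $C^{1,1}$-domain, available from the Agmon--Douglis--Nirenberg / Cattabriga theory. The pressure is then recovered via the $L^p$-version of de Rham's theorem: the Bogovski\u{\i} operator yields surjectivity of $\div\colon W_0^{1,p}(\Omega) \to L_0^p(\Omega)$, equivalently the inf-sup condition
\[
\inf_{q \in L_0^p(\Omega)} \sup_{\vec{v} \in W_0^{1,p'}(\Omega)} \frac{\fp{q}{\div \vec{v}}}{\norm{q}_p \norm{\vec{v}}_{1,p'}} > 0,
\]
so the residual functional $\vec{\phi} \mapsto \fp{\vec{F}}{\vec{\phi}} - \fp{\nabla\vec{u}_1}{\nabla\vec{\phi}}$ on $W_0^{1,p'}(\Omega)$, which vanishes on $V_{p'}$ by construction, is represented by the gradient of a unique $\pi \in L_0^p(\Omega)$. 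Summing the bounds from trace lifting, Bogovski\u{\i}, and the Stokes regularity gives the claimed a~priori estimate. Uniqueness follows by linearity: if all data vanish, coercivity of $-\Delta$ on $V_p$ (via the Stokes regularity) forces $\vec{u} = \vec{0}$, whence $\nabla\pi = 0$ in $W^{-1,p}(\Omega)$, and $\pi \in L_0^p(\Omega)$ then implies $\pi = 0$.

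The main obstacle is the $L^p$-solvability of the homogeneous Stokes system on a $C^{1,1}$-domain for arbitrary $p \in (1,\infty)$. For $p = 2$ this is immediate from Lax--Milgram on the Hilbert space $V_2$, but extending to general $p$ requires either Calder\'on--Zygmund-type estimates up to the $C^{1,1}$-boundary or a duality-plus-perturbation argument around $p = 2$; this is the substantive content quoted from~\cite{Decomposition_Amrouche98}. The other ingredients (trace lifting, Bogovski\u{\i}, de Rham) are standard and only use $C^{0,1}$-regularity of the boundary.
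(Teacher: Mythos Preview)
The paper does not actually prove this Proposition: it is stated as a quotation of a known result, with the attribution ``\cite[Corr.~4.15--~Thm.~4.18]{Decomposition\_Amrouche98}'' and no further argument. Your proof plan is correct and is essentially the standard reduction-and-lift scheme underlying that reference; in particular you have correctly identified that the only nontrivial ingredient beyond trace lifting, Bogovski\u{\i}, and de Rham is the $L^p$-Stokes regularity on a $C^{1,1}$-domain, which is precisely the content being cited. So you have supplied more detail than the paper itself does, and what you wrote is sound.
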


The very weak solvability is defined as follows:
\begin{defi}
	We consider the space
	\begin{gather*}
	Y_{t'}(\Omega) := \{ \vec{\phi} \in W^{2,t'}(\Omega): \tr \vec{\phi} = \vec{0}, \tr \div \vec{\phi} = 0 \}
	\end{gather*}
	with the subspace norm. 
	
	A pair $(\vec{u}, \pi) \in L^t(\Omega) \times W^{-1,t}(\Omega)$ is a very weak solution of the Stokes problem~\eqref{eq:stokes_inhom} with $\vec{f}=\vec{0}$ if it holds
	\begin{align*}
	- \intom \vec{u} \cdot \Delta \vec{\phi} \dx - \fp{\pi}{\div \vec{\phi}}_{W_0^{1,t'}(\Omega)}
	+ \fp{\vec{g_2}-(\vec{g_2}\dotn) \vec{n}}{\del_n \vec{\phi}}_{W^{\tfrac{1}{t},t'}(\del\Omega)} &= 0\,,\\
	\intom \vec{u}  \cdot \nabla \psi \dx + \intom g_1 \psi \dx
	- \fp{\vec{g_2}\dotn}{\psi}_{W^{\tfrac{1}{t},t'}(\del\Omega)} &= 0
	\end{align*}
	for all $\vec{\phi} \in Y_{t'}(\Omega)$, $\psi \in W^{1,t'}(\Omega)$.
\end{defi}

The existence result is proved in \cite[Prop.~1 and~2, Thm.~11]{SON_Amrouche_2011} for $d=3$, but it is straight-forward transferable to $d=2$:
\begin{prop}
	Let $t \in (1, \infty)$.
	$(\vec{u}, \pi) \in L^t(\Omega) \times W^{-1,t}(\Omega)$ is a very weak solution of the Stokes problem~\eqref{eq:stokes_inhom} with $\vec{f}=\vec{0}$ if and only if it is a distributional solution.
	
	For $g_1 \in L^s(\Omega)$ and $\vec{g_2} \in W^{-\tfrac{1}{t},t}(\del\Omega)$ with $t = s\d$, there is exactly one very weak solution. It satisfies
	\begin{equation*}
	\norm{\vec{u}}_{t} + \norm{\pi}_{-1,t} \leq C \big (\norm{g_1}_s + \norm{\vec{g_2}}_{-\tfrac{1}{t},t}\big)\,.
	\end{equation*}
\end{prop}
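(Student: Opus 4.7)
The argument is a duality construction built on the strong Stokes regularity theory for $C^{1,1}$ domains. The key input is the Cattabriga-type estimate: for each right-hand side $(\vec{F}, g) \in L^{t'}(\Omega) \times W^{1,t'}(\Omega)$ with $g \in L_0^{t'}(\Omega)$, the adjoint homogeneous-Dirichlet Stokes system
\begin{equation*}
-\Delta\vec{\phi} + \nabla\psi = \vec{F}, \quad \div\vec{\phi} = g, \quad \vec{\phi}|_{\del\Omega} = \vec{0}
\end{equation*}
admits a unique solution $(\vec{\phi}, \psi) \in W^{2,t'}(\Omega) \times \big(W^{1,t'}(\Omega) \cap L_0^{t'}(\Omega)\big)$ with $\norm{\vec{\phi}}_{2,t'} + \norm{\psi}_{1,t'} \leq C\big(\norm{\vec{F}}_{t'} + \norm{g}_{1,t'}\big)$; if additionally $g$ has vanishing boundary trace then $\vec{\phi} \in Y_{t'}(\Omega)$.

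The equivalence of very weak and distributional formulations is handled first. In one direction, testing against $\vec{\phi} \in C_c^\infty(\Omega) \cap Y_{t'}(\Omega)$ makes all boundary contributions vanish and recovers the distributional equation. In the other direction, a formal integration by parts of a distributional solution $(\vec{u}, \pi) \in L^t(\Omega) \times W^{-1,t}(\Omega)$ against $\vec{\phi} \in Y_{t'}(\Omega)$ produces precisely the boundary pairings appearing in the very weak formulation; these are well defined because $\del_n\vec{\phi}$ and $\psi|_{\del\Omega}$ both belong to $W^{1-\tfrac{1}{t'},t'}(\del\Omega) = \big(W^{-\tfrac{1}{t},t}(\del\Omega)\big)\d$, while the constraints $\tr\vec{\phi} = \vec{0}$ and $\tr\div\vec{\phi} = 0$ encoded in $Y_{t'}$ suppress the otherwise undefined boundary traces of $\vec{u}$ itself.

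For existence and uniqueness, I would define the linear functional
\begin{equation*}
\Lambda(\vec{F}, g) := \fp{\vec{g_2}-(\vec{g_2}\dotn)\vec{n}}{\del_n\vec{\phi}} - \fp{\vec{g_2}\dotn}{\psi} + \intom g_1 \psi \dx
\end{equation*}
on the admissible subspace of $L^{t'}(\Omega) \times W^{1,t'}(\Omega)$, where $(\vec{\phi}, \psi)$ is the strong adjoint solution above. The boundary pairings are controlled by $\norm{\vec{g_2}}_{-\tfrac{1}{t},t}$ times the trace norms of $\vec{\phi}$ and $\psi$, while the volume term is controlled by the Sobolev embedding $W^{1,t'}(\Omega) \injto L^{s'}(\Omega)$, which is sharp here because $t = s\d$ is equivalent to $s' = (t')\d$. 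Combined with the Cattabriga estimate this yields $\abs{\Lambda(\vec{F}, g)} \leq C\big(\norm{g_1}_s + \norm{\vec{g_2}}_{-\tfrac{1}{t},t}\big)\big(\norm{\vec{F}}_{t'} + \norm{g}_{1,t'}\big)$. Riesz representation on $L^{t'}(\Omega)$ (for the $\vec{F}$-dependence) together with the identification $W^{-1,t}(\Omega) = \big(W^{1,t'}_0(\Omega)\big)\d$ (for the $g$-dependence) then produces a unique pair $(\vec{u}, \pi) \in L^t(\Omega) \times W^{-1,t}(\Omega)$ representing $\Lambda$ and satisfying the very weak formulation with the asserted estimate. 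Uniqueness is immediate from the same duality: a homogeneous very weak solution forces $\Lambda \equiv 0$, hence $\vec{u} = \vec{0}$ and $\pi = 0$.

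The main obstacle is the trace analysis: since $\vec{u}$ lives only in $L^t(\Omega)$ its boundary trace does not exist classically, so one must rely on the Green-type identity engineered by $Y_{t'}$ together with the sharp dualities between fractional trace spaces of positive and negative order. Once the Cattabriga $W^{2,t'}$-regularity is accepted (which is available for both $d=2$ and $d=3$ under $\del\Omega \in C^{1,1}$), the passage from the three-dimensional argument of \cite{SON_Amrouche_2011} to $d=2$ requires no substantive modification, since all Sobolev and trace embeddings used are dimension-robust in the relevant range of exponents.
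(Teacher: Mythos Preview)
The paper does not prove this proposition itself; it simply invokes \cite[Prop.~1 and~2, Thm.~11]{SON_Amrouche_2011} for $d=3$ and asserts that the argument carries over to $d=2$. Your sketch is precisely the transposition (duality) method underlying that reference---solve the adjoint homogeneous-Dirichlet Stokes system in $W^{2,t'}\times W^{1,t'}$ via the Cattabriga estimate, read the very weak formulation as a bounded functional on the adjoint data space, and represent it via the $L^t \times W^{-1,t}$ dualities---so your approach is correct and in agreement with the source the paper defers to.
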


Obviously, a weak solution is also distributional solution and, hence, a very weak solution. Due to the uniqueness of very weak solutions, we find that both solutions agree if $g_1 \in L^p(\Omega) \cap L^s(\Omega)$, $\vec{g_2} \in W^{1-\tfrac{1}{p},p}(\del\Omega) \cap W^{-\tfrac{1}{t},t}(\del\Omega)$ and $t = s\d$.

\subsection{Solution for Lipschitz domains}

We assume $\del \Omega \in C^{0,1}$. Concerning the weak solvability of the Dirichlet Laplace problem, we have the following result (this follows from \cite[Thm.~0.5]{Lipschitz_JK95} in combination with a standard trace lift, see also the discussion in~\cite[p.~2]{Laplace_Amrouche22}):
\begin{prop}\label{prop:ex_weak_laplace}
	Let $p \in [\tfrac{2d}{d+1}, \tfrac{2d}{d-1}]$ and $g_2 \in W^{1-\tfrac{1}{p},p}(\del\Omega)$. Then there is a unique weak solution $u \in W^{1,p}(\Omega)$ to
	\begin{align} \label{eq:fs:laplace}
	\begin{aligned}
	- \Delta u &= 0 && \textrm{in } \Omega\,,\\
	u &= g_2 && \textrm{on } \del\Omega\,.
	\end{aligned}
	\end{align}
	It satisfies
	\begin{equation*}
	\norm{u}_{1,p} \leq C \norm{g_2}_{1-\tfrac{1}{p},p}.
	\end{equation*}
\end{prop}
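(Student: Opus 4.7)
The plan is to reduce the inhomogeneous Dirichlet problem to one with zero boundary data by subtracting a trace lift. On any bounded Lipschitz domain, a standard partition-of-unity argument together with extension from Lipschitz graph pieces produces a bounded right-inverse $E$ of the trace operator $\tr \colon W^{1,p}(\Omega) \to W^{1-\tfrac{1}{p},p}(\del\Omega)$. Setting $\tilde u := E g_2$, I obtain $\tr \tilde u = g_2$ together with $\norm{\tilde u}_{1,p} \leq C \norm{g_2}_{1-\tfrac{1}{p},p}$, and this step uses nothing about $p$ beyond $p \in (1,\infty)$.

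Letting $w := u - \tilde u$, the problem \eqref{eq:fs:laplace} becomes $-\Delta w = \Delta \tilde u$ with $w \in W_0^{1,p}(\Omega)$. The distribution $\Delta \tilde u$ lies in $W^{-1,p}(\Omega)$ via $\fp{\Delta \tilde u}{\varphi} = -\fp{\nabla \tilde u}{\nabla \varphi}$ for $\varphi \in W_0^{1,p'}(\Omega)$, with bound $\norm{\Delta \tilde u}_{-1,p} \leq \norm{\nabla \tilde u}_p \leq C \norm{g_2}_{1-\tfrac{1}{p},p}$. I would then invoke the sharp Jerison--Kenig solvability result \cite[Thm.~0.5]{Lipschitz_JK95}, which on any bounded Lipschitz domain guarantees, for every $F \in W^{-1,p}(\Omega)$, a unique weak solution $w \in W_0^{1,p}(\Omega)$ of $-\Delta w = F$ with $\norm{w}_{1,p} \leq C \norm{F}_{-1,p}$, precisely on the range $p \in [\tfrac{2d}{d+1}, \tfrac{2d}{d-1}]$. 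Setting $u := \tilde u + w$ then yields a weak solution of \eqref{eq:fs:laplace} and the desired estimate follows from the triangle inequality. Uniqueness transfers from the homogeneous-boundary problem: applying the Jerison--Kenig uniqueness to the difference of two putative solutions of \eqref{eq:fs:laplace} immediately forces them to coincide.

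The hard part here is the Jerison--Kenig input itself: the admissible range is sharp on general Lipschitz domains and cannot be reached by elementary Lax--Milgram or Meyers-type perturbation arguments outside of a neighbourhood of $p=2$. Its proof proceeds via harmonic-measure and layer-potential techniques adapted to Lipschitz graphs, together with duality between the two conjugate endpoints $\tfrac{2d}{d+1}$ and $\tfrac{2d}{d-1}$. Once this ingredient is accepted as a black box, the reduction to zero boundary values through a trace lift is completely routine, and no further regularity of $\del\Omega$ beyond Lipschitz is needed for any other step of the argument.
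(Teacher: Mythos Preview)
Your proposal is correct and follows exactly the approach indicated in the paper: the paper does not give a detailed proof but merely states that the result follows from \cite[Thm.~0.5]{Lipschitz_JK95} combined with a standard trace lift. Your write-up simply makes this reduction explicit, and your remarks on the sharpness of the Jerison--Kenig range are accurate.
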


\begin{cor}\label{cor:fs:laplace_lq}
	Let $p \in [\tfrac{2d}{d+1}, \tfrac{2d}{d-1}]$, $t \geq 2$ and $g_2 \in W^{1-\tfrac{1}{p},p}(\del\Omega) \cap L^t(\del\Omega)$. Then the weak solution $u \in W^{1,p}(\Omega)$ of \eqref{eq:fs:laplace} lies also in $L^t(\Omega)$ and fulfills
	\begin{equation*}
	\norm{u}_{t} \leq C \norm{g_2}_{t}\,.
	\end{equation*}
\end{cor}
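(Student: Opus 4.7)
My plan is to obtain the estimate by interpolation between two endpoint bounds, and then to identify the resulting harmonic extension with the weak solution from Proposition~\ref{prop:ex_weak_laplace} by a density argument.

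For the $t=\infty$ endpoint, the classical maximum principle applied to the weak solution $u$ of~\eqref{eq:fs:laplace} yields $\norm{u}_{\infty} \leq \norm{g_2}_{\infty}$, so the harmonic extension operator $\mathcal{H}\colon g_2 \mapsto u$ is bounded $L^\infty(\del\Omega) \to L^\infty(\Omega)$ with operator norm one. For the $t=2$ endpoint, I would invoke Dahlberg's $L^2$-theory for the Dirichlet problem on Lipschitz domains (cf.~\cite{Lipschitz_JK95} and references therein), which provides the non-tangential maximal function bound $\norm{N(u)}_{L^2(\del\Omega)} \leq C \norm{g_2}_{L^2(\del\Omega)}$. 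Transferring this to an interior estimate via the pointwise inequality $\abs{u(x)} \leq N(u)(\pi(x))$ for a boundary basepoint $\pi(x)$ and a Fubini-type argument over a collar neighborhood of $\del\Omega$ yields $\norm{u}_{L^2(\Omega)} \leq C \norm{g_2}_{L^2(\del\Omega)}$.

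With the endpoint bounds in hand, linearity of $\mathcal{H}$ and Riesz-Thorin interpolation give $\norm{u}_t \leq C \norm{g_2}_t$ for every $t \in [2, \infty]$. To apply this to the weak solution from Proposition~\ref{prop:ex_weak_laplace}, I would approximate $g_2$ by smooth boundary data $g_2^n \to g_2$ in $W^{1-\tfrac{1}{p},p}(\del\Omega)$ with $\norm{g_2^n}_{L^t(\del\Omega)} \leq C \norm{g_2}_{L^t(\del\Omega)}$ uniformly (e.g.~via mollification along the Lipschitz boundary). Proposition~\ref{prop:ex_weak_laplace} then gives $u^n \to u$ in $W^{1,p}(\Omega)$, while the uniform $L^t$-bound on $u^n$ passes to the weak limit.

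The main obstacle is the $L^2$-endpoint: Dahlberg's boundedness of the harmonic extension on a general Lipschitz domain is a nontrivial potential-theoretic result, and the transfer from the non-tangential maximal function to the interior $L^2$-norm requires some care with the collar structure of $\del\Omega$. An elementary alternative would exploit the subharmonicity of $\abs{u}^t$ for $t \geq 2$ and compare with the harmonic extension $w$ of $\abs{g_2}^t$; but bounding $\norm{w}_{L^1(\Omega)}$ by $\norm{g_2}_{L^t(\del\Omega)}^{t}$ via Green's identity reduces the question to $L^\infty$-boundedness of the outward normal derivative of the torsion function, which can fail at reentrant corners and therefore does not cover general Lipschitz domains.
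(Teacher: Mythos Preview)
Your argument is correct and arrives at the same conclusion, but the route differs from the paper's. The paper invokes Dahlberg's theorem \cite[Thm.~1]{Dahlberg1979} directly, which already provides the $L^t$-bound $\norm{u}_t \leq C\norm{g_2}_t$ for every $t\geq 2$ on a bounded Lipschitz domain; it then identifies the Poisson integral with the weak solution of Proposition~\ref{prop:ex_weak_laplace} via uniqueness (first for continuous $g_2$, using that Lipschitz domains are NTA and hence regular for the Dirichlet problem, then by density). You instead re-derive the $L^t$-bound from two endpoints---the maximum principle for $t=\infty$ and Dahlberg's $L^2$-solvability together with a collar/Fubini transfer from the non-tangential maximal function---followed by Riesz--Thorin interpolation, and close with essentially the same density argument. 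Both approaches rest on the same deep input (Dahlberg's theory of harmonic measure on Lipschitz domains); the paper simply quotes the finished $L^t$-statement, whereas you reconstruct it from the $L^2$-case, which is arguably more transparent but not more elementary, since the $L^2$-endpoint is already the heart of Dahlberg's result. Your closing remark about the subharmonicity route via the torsion function is apt: that alternative indeed fails at reentrant corners, and neither proof avoids the potential-theoretic input.
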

\begin{proof}
	We want to apply \cite[Thm.~1]{Dahlberg1979}, where the
	$L^t$-estimate is provided for the Poisson integral of $g_2$. Due to
	\cite[p.~2]{Dahlberg1979}, this statement, which is formulated for
	$d \geq 3$, holds for $d = 2$ as well.  To begin with, let in
	addition $g_2$ be a continuous function. We notice that Condition
	(i) in \cite[Thm.~1]{Dahlberg1979} is fulfilled for any bounded
	Lipschitz domain $\Omega$. Hence, $g_2$ is $L^t$-integrable
	w.r.t.~the harmonic measure $\omega$, i.e.
	$\tilde{u} := \int_{\del\Omega} g_2 \omega \in L^t(\Omega)$ with
	$\norm{\tilde{u}}_{t} \leq C \norm{g_2}_{t}$.
	
	It follows from \cite[Thm.~4.4b]{Nonsmooth_JK81} that $\tilde{u}$ is harmonic in $\Omega$. Furthermore, $\Omega$ is non-tangentially accessible and, hence, regular for the Dirichlet problem~\cite[p.~14]{NTA_JK82}, so $\tilde{u} \in C(\closed{\Omega})$ and $\tr \tilde{u} = \tilde{u} = g_2$ on $\del\Omega$. Thus, we have $u = \tilde{u}$ due to the uniqueness of $u$, cf. Proposition~\ref{prop:ex_weak_laplace}.
	The result then follows from \cite[Thm.~1]{Dahlberg1979} for continuous $g_2$.
	
	For general $g_2$, we note that $C(\del\Omega) \cap W^{1-\tfrac{1}{p},p}(\del\Omega) \cap L^t(\del\Omega)$ is a dense subset of $W^{1-\tfrac{1}{p},p}(\del\Omega) \cap L^t(\del\Omega)$ because of a classical convolution argument~\cite{SobolevSpaces_Adams03}. Thus, the claim follows by approximation.
\end{proof}

\begin{rem}
	Refined estimates are possible in some special cases, e.g. if the domain is simply-connected \cite{Shen95_Stokes}. We refer to \cite{Kenig85_Recent} for an overview on results on very weak solutions of elliptic boundary value problems on Lipschitz domains.
\end{rem}

A component-wise application of Corollary \ref{cor:fs:laplace_lq}
proves Proposition \ref{prop:s:fs_lipschitz}.

\printbibliography

\end{document}